\def\BBox{\kern  -0.2cm\hbox{\vrule width 0.2cm height 0.2cm}}
\renewcommand{\phi}{\varphi}
\newtheorem{lemma}{Lemma}[section]
\newtheorem{theorem}{Theorem}[section]
\title{Colorful Associahedra and Cyclohedra}
\author{
Gabriela Araujo-Pardo\thanks{Supported by CONACYT 166306, 178395; PAPIIT-M\'exico under project IN101912.  (garaujo@matem.unam.mx)}, 
Isabel Hubard\thanks{Supported by CONACYT 166951, PAPIIT-M\'exico under project IB101412 and by Para las Mujeres en la Ciencia L'Oreal-UNESCO-AMC 2012. (isahubard@im.unam.mx)}, \\
Deborah Oliveros\thanks{Supported by PAPIIT-M\'exico under project IN101912, CONACyT 166306. (dolivero@matem.unam.mx)},\\
{\small  Instituto de Matem\'{a}ticas}\\
{\small  Universidad Nacional Aut\'{o}noma de M\'{e}xico, M\'{e}xico}
\\[1.2ex]
Egon Schulte\thanks{Supported by NSF-grant DMS-0856675 and PAPIIT-M\'exico under project IN101912. (schulte@neu.edu)}\\
{\small Department of Mathematics}\\
{\small Northeastern University, Boston, USA} }
\begin{document}
\maketitle
\begin{center}
{\it Dedicated to the memory of Andrei Zelevinsky}
\end{center}

\begin{abstract}
Every $n$-edge colored $n$-regular graph $\mathcal{G}$ naturally gives rise to a simple abstract $n$-polytope, {\em the colorful polytope of $\mathcal{G}$\/}, whose $1$-skeleton is isomorphic to $\mathcal{G}$. The paper describes colorful polytope versions of the associahedron and cyclohedron. Like their classical counterparts, the colorful associahedron and cyclohedron encode triangulations and flips, but now with the added feature that the diagonals of the triangulations are colored and adjacency of triangulations requires color preserving flips. The colorful associahedron and cyclohedron are derived as colorful polytopes from the edge colored graph whose vertices represent these triangulations and whose colors on edges represent the colors of flipped diagonals.
\end{abstract}


{\bf Key words.} ~ Associahedron, Cyclohedron, Abstract Polytope, Regular Graph, Edge-Coloring.

{\bf MSC 2010.} ~ Primary: 51M20. Secondary: 05C25, 52B15.


\section{Introduction}

There are interesting connections between abstract polytopes and edge colored regular graphs. Every $n$-edge colored $n$-regular graph $\mathcal{G}$ naturally gives rise to a simple abstract $n$-polytope, called the colorful polytope of $\mathcal{G}$, whose $1$-skeleton is isomorphic to $\mathcal{G}$ and whose higher-rank structure is built from $\mathcal{G}$ by following precise instructions encoded in $\mathcal{G}$ (see \cite{colpoly}). In this paper we describe colorful polytope versions of two  well-known convex polytopes, the associahedron and the cyclohedron.

The associahedron is a simple convex polytope first described combinatorially by Stasheff~\cite{stash} in 1963. It is often called the Stasheff polytope. Its combinatorial structure was studied independently by Tamari~\cite{tamari} as a partially ordered set of bracketings of a non-associative product of factors. A realization of this structure as a convex polytope was discovered by Lee \cite{lee}, as well as by Haiman and Milnor in unpublished work, see also ~\cite{cfz} and ~\cite{post} for excellent references. There are other realizations of the associahedron, for instance, the realization of Shnider \& Sternberg~\cite{Shinder} using planar binary trees. See also Loday~\cite{loday} for an algorithm that uses trees to find realizations with integer coordinates. The classical associahedron arises as a special case of the more general construction of secondary polytopes due to Fomin \& Zelevinsky \cite{fom-zele}. These generalized associahedra also include the cyclohedron, a simple convex polytope first described as a combinatorial object in Bott \& Taubes~\cite{Bott} in connection with knot theory, and independently as a geometric polytope by Simion~\cite{simion}.

The ordinary associahedron and cyclohedron can be constructed as convex polytopes that encode triangulations and flips. Their  colorful polytope versions, the colorful associahedron and colorful cyclohedron, respectively, are abstract polytopes that similarly encode triangulations and flips, but now with the new feature that the diagonals of the triangulations are colored and adjacency of triangulations requires color preserving flips. The colorful associahedron and colorful cyclohedron are combinatorial coverings of the ordinary associahedron and cyclohedron, respectively.

The paper is organized as follows. We begin in Section~\ref{secbasnot} with a brief review of basic concepts for graphs and abstract polytopes. Then Sections~\ref{defcolas} and \ref{cov} investigate the combinatorial structure of the colorful associahedron and establish its covering relationship with the classical associahedon. In Section~\ref{autcolas} we determine the automorphism group of the colorful associahedron. Finally, Section~\ref{cyc} describes the structure of the colorful cyclohedron.

The last author wishes to dedicate this paper to the memory of his long-term colleague and friend, Andrei Zelevinsky, who recently passed away. 

\section{Basic notions}
\label{secbasnot}

We begin with a brief review of some terminology for graphs and abstract polytopes. See \cite{CL96,McMS02} for further basic definitions and results.

Let $\mathcal{G}$ be a finite simple graph (without loops or multiple edges), and let $V(\mathcal{G})$ denote its vertex set and  $E(\mathcal{G})$ its edge set. An {\em edge coloring\/} of $\mathcal{G}$ is an assignment of colors to the edges of $\mathcal{G}$
such that adjacent edges are colored differently; this is an {\em $n$-edge coloring} if $n$ colors are used. The minimum number $n$ for which $\mathcal{G}$ admits an $n$-edge coloring is called its {\em chromatic index\/} and is denoted by $\chi_{1}(\mathcal{G})$. 

Let $\Delta(\mathcal{G})$ denote the maximum degree among the vertices of $\mathcal{G}$. Clearly $\Delta(\mathcal{G})\leq\chi_{1}(\mathcal{G})$, and it was established in~\cite{Vizing} that $\chi _{1}(\mathcal{G})\leq \Delta (\mathcal{G})+1$. Thus finite graphs $\mathcal{G}$ either have $\chi_{1}(\mathcal{G})=\Delta (\mathcal{G})$ or $\chi_{1}(\mathcal{G})=\Delta(\mathcal{G})+1$. 
Throughout this paper we are mainly interested in connected $n$-regular graphs $\mathcal{G}$ of the first type, that is, graphs with $\chi_{1}(\mathcal{G})=\Delta (\mathcal{G})=n$. Any such ($n$-regular) graph, equipped with an $n$-edge coloring, is called a {\em properly edge colored graph\/}. 

An automorphism of a finite simple (uncolored) graph $\mathcal{G}$ is a permutation of the vertices of $\mathcal{G}$ that preserves the edges of $\mathcal{G}$. By $\Gamma(\mathcal{G})$ we denote the group of all automorphisms of $\mathcal{G}$. (This is a digression from \cite{CL96}.)  Now suppose $\mathcal{G}$ is a properly edge colored graph and $f\!: E(\mathcal{G})\mapsto R$ the underlying edge coloring map, where $R$ is the set of colors used. An automorphism $\gamma\in\Gamma(\mathcal{G})$ is called {\em color preserving} if $\gamma$ preserves the color of every edge of $\mathcal{G}$; that is, $f(\gamma(e))=f(e)$ for every edge $e$ of $\mathcal{G}$. More generally we say that $\gamma\in\Gamma(\mathcal{G})$ is {\em color respecting\/} if $f(\gamma(e))=f(\gamma(e'))$ whenever $e,e'$ are edges with $f(e)=f(e')$. Thus there are two interesting subgroups of $\Gamma(\mathcal{G})$ associated with the edge coloring of $\mathcal{G}$, namely the subgroup $\Gamma_p(\mathcal{G})$ consisting of all color preserving automorphisms of $\mathcal{G}$, and the subgroup $\Gamma_{c}(\mathcal{G})$ consisting of all color respecting automorphisms of $\mathcal{G}$. Clearly, $\Gamma_p(\mathcal{G})\leq \Gamma_c(\mathcal{G})\leq \Gamma(\mathcal{G})$.
\medskip

An ({\em abstract\/}) {\em polytope of rank\/} $n$, or simply an {\em $n$-polytope}, is a partially ordered set $\mathcal{P}$ with a strictly monotone rank function with range $\{-1,0,\ldots,n\}$ satisfying the following conditions. The elements of rank $j$ are called the {\em $j$-faces\/} of $\mathcal{P}$, or {\em vertices}, {\em edges\/} and {\em facets\/} of $\mathcal{P}$ if $j = 0$, $1$ or $n-1$, respectively. Each {\em flag\/} (maximal totally ordered subset) of $\mathcal{P}$ contains exactly $n + 2$ faces, including a unique minimal face $F_{-1}$ (of rank $-1$) and a unique maximal face $F_{n}$ (of rank $n$). Further, $\mathcal{P}$ is {\em strongly flag-connected\/}, meaning that any two flags $\Phi$ and $\Psi$ of $\mathcal{P}$ can be joined by a sequence of flags $\Phi = \Phi_{0},\Phi_{1},\ldots,\Phi_{l-1},\Phi_{l}= \Psi$, all containing $\Phi \cap \Psi$, such that $\Phi_{i-1},\Phi_{i}$ are {\em adjacent\/} (differ by exactly one face) for each $i=1,\ldots,l$. Finally, $\mathcal{P}$ satisfies the diamond condition, namely if $F$ is a $(j-1)$-face and $G$ a $(j+1)$-face with $F < G$, then there are exactly {\em two\/} $j$-faces $H$ such that $F<H<G$.

When $F$ and $G$ are two faces of an $n$-polytope $\mathcal{P}$ with $F \leq G$, we call $G/F := \{H \mid F \leq H \leq G\}$ a {\em section\/} of $\mathcal{P}$.  We usually identify a face $F$ with the section $F/F_{-1}$. For a face $F$, the section $F_{n}/F$ is said to be the {\em co-face\/} of $\mathcal{P}$ at $F$, or the {\em vertex-figure\/} at $F$ if $F$ is a vertex.  

An {\em automorphism} of a polytope $\mathcal{P}$ is a bijection of the faces of $\mathcal{P}$ that preserves the order. The {\em automorphism group\/} of $\mathcal{P}$ is denoted by $\Gamma(\mathcal{P})$.  

We call an abstract polytope of rank $n$ {\em simple\/} if its vertex-figures are isomorphic to $(n-1)$-simplices. This generalizes a well-known concept for convex polytopes (see \cite{Z95}).

Properly edge colored graphs naturally give rise to abstract polytopes. In fact, the following theorem was established in~\cite{colpoly}.

\begin{theorem}
\label{colpo}
Every finite connected properly edge colored $n$-regular graph $\mathcal{G}$ is the $1$-skeleton of a simple abstract polytope of rank $n$, the {\em colorful polytope\/} $\mathcal{P}_{\mathcal{G}}$ of $\mathcal{G}$.
\end{theorem}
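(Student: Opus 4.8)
The plan is to prove, by induction on $n$, that the poset $\mathcal{P}_{\mathcal{G}}$ described below is a simple abstract $n$-polytope whose $1$-skeleton is $\mathcal{G}$; the cases $n\le 1$ are immediate. Fix the $n$-edge coloring of $\mathcal{G}$, with color set $R$, so that every vertex meets exactly one edge of each color. For a subset $S\subseteq R$ let $\mathcal{G}_S$ be the spanning subgraph of $\mathcal{G}$ carrying the edges whose color lies in $S$; it is $|S|$-regular and properly $|S|$-edge colored, and hence so is every one of its connected components, which therefore uses every color of $S$. Take as the elements of $\mathcal{P}_{\mathcal{G}}$ a formal least element $F_{-1}$ together with all connected components $H$ of all the graphs $\mathcal{G}_S$, $S\subseteq R$; set $\mathrm{rank}(H)=|S|$ when $H$ is a component of $\mathcal{G}_S$, and order by $F_{-1}<H$ for every $H$ and by $H\le H'$ iff $H\subseteq H'$ as subgraphs. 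Because $\mathcal{G}$ is connected, $\mathcal{G}_R=\mathcal{G}$ is the unique greatest element $F_n$; the components of $\mathcal{G}_\emptyset$ are the vertices of $\mathcal{G}$, those of $\mathcal{G}_{\{c\}}$ are its single edges, and the incidences between faces of ranks $0$ and $1$ recover $\mathcal{G}$, so the $1$-skeleton is as claimed.

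Next I would check the remaining combinatorial axioms directly. The rank function is strictly monotone because two distinct components of one $\mathcal{G}_S$ are vertex-disjoint, so $H<H'$ forces $S\subsetneq S'$ for the respective color sets. No maximal chain can skip a rank: if $H$, a component of $\mathcal{G}_S$, lies below a component $H'$ of $\mathcal{G}_T$ with $|T|\ge |S|+2$, then for any $c\in T\setminus S$ the component of $\mathcal{G}_{S\cup\{c\}}$ containing $H$ lies strictly between them, and similarly one can always insert a vertex below a face of positive rank and a face of rank $n-1$ below $F_n$. Hence every flag has exactly one face of each rank $-1,0,\dots,n$, so $n+2$ faces in all, and such flags correspond to pairs consisting of a vertex $v$ and an ordering $(c_1,\dots,c_n)$ of $R$, the rank-$j$ face being the component of $\mathcal{G}_{\{c_1,\dots,c_j\}}$ that contains $v$. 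For the diamond condition let $F<G$ with $\mathrm{rank}(F)=j-1$ and $\mathrm{rank}(G)=j+1$: if $j=0$ then $G$ is an edge, $F=F_{-1}$, and the two faces in between are the endpoints of $G$; if $j\ge 1$ then the color set of $G$ is the color set $S$ of $F$ together with two extra colors $a,b$, and the two faces strictly between $F$ and $G$ are the component of $\mathcal{G}_{S\cup\{a\}}$ and the component of $\mathcal{G}_{S\cup\{b\}}$ that contain $F$, which are distinct and, by connectedness, automatically contained in $G$.

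The substantial point is strong flag-connectedness, which I would obtain from the induction after identifying the proper sections. If $F$ is a component of $\mathcal{G}_S$ then the section $F/F_{-1}$ is canonically isomorphic to the poset $\mathcal{P}_{F}$ built from the graph $F$ by the same recipe (and $F$ has rank $|S|<n$ unless $F=\mathcal{G}$), while the co-face $F_n/F$ is isomorphic to the Boolean lattice on $R\setminus S$, i.e. to the face lattice of the $(n-|S|-1)$-simplex; in particular the vertex-figures are $(n-1)$-simplices, so $\mathcal{P}_{\mathcal{G}}$ is simple, and by the inductive hypothesis every proper section is strongly flag-connected. Now let $\Phi,\Psi$ be flags. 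If $\Phi\cap\Psi$ contains a face $F$ of rank $j$ with $0\le j\le n-1$, then the flags through $F$ form, under adjacency, the Cartesian product of the flag graphs of $F/F_{-1}$ and of $F_n/F$, and applying strong flag-connectedness in each of these two factors joins $\Phi$ to $\Psi$ by a chain of successively adjacent flags all containing $\Phi\cap\Psi$. If instead $\Phi\cap\Psi=\{F_{-1},F_n\}$, then one only needs plain flag-connectedness of $\mathcal{P}_{\mathcal{G}}$: in the description of a flag by a pair $(v,(c_1,\dots,c_n))$, the adjacencies at ranks $1,\dots,n-1$ transpose consecutive entries $c_i,c_{i+1}$ and so realize every reordering of $R$, while an adjacency at rank $0$ slides $v$ along the edge colored $c_1$, and since $\mathcal{G}$ is connected these moves suffice to pass from any flag to any other. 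The main obstacle is precisely this connectivity argument, and within it the identification of the sections as smaller colorful polytopes and as simplices so that the induction closes; the other axioms follow routinely from the fact that each vertex of $\mathcal{G}$ meets exactly one edge of every color.
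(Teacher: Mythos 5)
Your proposal is correct, and it is essentially the construction the paper itself describes: your faces (connected components of the color-restricted subgraphs $\mathcal{G}_S$) are exactly the pairs $(C,v)$, i.e.\ the equivalence classes under $\sim_C$, given in Section~2, and your verification --- diamond condition from the two one-color extensions of $S$, co-faces as Boolean lattices giving simplicial vertex-figures, and strong flag-connectedness by induction on sections together with the parametrization of flags by pairs $(v,(c_1,\ldots,c_n))$ --- is the same route taken in \cite{colpoly}, to which the present paper defers for the proof. There is nothing to correct.
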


The face structure of the colorful polytope $\mathcal{P}_{\mathcal{G}}$ of Theorem~\ref{colpo} is best described in terms of a family of equivalence relations on the vertex set of $\mathcal{G}$. As before let $f\!: E(\mathcal{G})\mapsto R$ denote the edge coloring map for $\mathcal{G}$. Given a subset of colors $C\subseteq R$ we define the equivalence relation $\sim_C$ on $V(\mathcal{G})$ as follows:\ if $v,w$ are vertices of $\mathcal{G}$, then $v\sim_{C} w$ if and only if there exists a path in $\mathcal{G}$ from $v$ to $w$ all of whose edges have colors in $C$. Then the faces of $\mathcal{P}_{\mathcal{G}}$ of non-negative ranks can be described as follows. 

The faces of rank $0$, the vertices, are just the vertices of $\mathcal{G}$. It is convenient to identify faces (of non-negative rank) with their vertex sets (subsets of $V(\mathcal{G})$), and simply define incidence between two faces by set-theoretic inclusion of their vertex sets. More precisely, for ranks $j=0,1,\ldots,n$, the vertex set of a typical $j$-face $F$ of $\mathcal{P}_{\mathcal{G}}$ is an equivalence class for the equivalence relation $\sim_C$ associated with a $j$-element subset $C$ of $R$. In other words, a typical $j$-face $F$ of $\mathcal{P}_{\mathcal{G}}$ can be represented as a pair $(C,v)$, where $v$ is a vertex, $C$ is $j$-subset of $R$, and $F$ is the equivalence class of $v$ under $\sim_C$. We simply write $F=(C,v)$. For $C=\emptyset$, that is $j=0$, we recover the vertices of $\mathcal{G}$; for $C=R$ we obtain $V(\mathcal{G})$ as (the vertex set of) the $n$-face of $\mathcal{P}_{\mathcal{G}}$. Finally, we simply append the empty set as the face of $\mathcal{P}_{\mathcal{G}}$ of rank $-1$.

It was proved in \cite{colpoly} that $\Gamma(\mathcal{P}_{\mathcal{G}})=\Gamma_{c}(\mathcal{G})$, that is, the automorphism group of the colorful polytope associated with $\mathcal{G}$ is just the color-respecting automorphism group of $\mathcal{G}$. 

\section{The colorful associahedron $\mathcal{A}^{c}_n$}
\label{defcolas}

The classical associahedron is an $n$-dimensional convex polytope constructed from the triangulations of a convex $(n+3)$-gon (for $n\geq 0$). In this section we describe the colorful associahedron, a colorful polytope variant covering the face-lattice of the ordinary associahedron. 

Suppose $N$ is a convex $(n+3)$-gon in the plane with vertices labeled $1,2,\ldots,n+3$ in cyclic order. A {\em triangulation\/} $t$ of $N$ is a tessellation of $N$ by triangles with vertices from among those of $N$. Any triangulation of $N$ consists of $n+1$ 
triangles and has $n$ diagonals (edges of triangles passing through the interior of $N$). We call a diagonal {\em short\/} or {\em long\/} if it joins vertices that are respectively two steps or more than two steps apart on the boundary of $N$. Let $\Delta_n$ denote the set of all triangulations of $N$. It is well-known (see~\cite{stan}) that the number of triangulations of a convex $(n+3)$-gon is given by the $(n+1)$-st Catalan number, $C_{n+1}$, so
\begin{equation}
\label{Catalan}
|\Delta_n| = C_{n+1} = \frac{1}{n+2} {{2n+2}\choose n+1}.
\end{equation}

If $t$ is a triangulation of $N$ and $d$ is a diagonal of $t$, then a new triangulation $t'=t'_{d}$ of $N$ can be obtained from $t$ 
by {\em flipping\/} $d$. More precisely, since $d$ is a common edge of two adjacent triangles in $t$ forming a convex quadrilateral, we can replace $d$ by the other diagonal $d'$ of this quadrilateral to obtain two new triangles meeting along $d'$, which, together with all other triangles of $t$, give the new triangulation $t'$ of~$N$.

The flipping operations for diagonals determine a graph $\mathcal{G}_n$ with vertex set $\Delta_n$, the {\em exchange graph\/}. In this graph, two vertices $t$ and $t'$ are {\em adjacent\/} if and only if, when viewed as triangulations, $t'$ is obtained from $t$ by flipping a diagonal of $t$. It is well-known that $\mathcal{G}_n$ is the $1$-skeleton of a simple convex $n$-polytope $\mathcal{A}_n$ known as the $n$-dimensional {\em associahedron\/}, or $n$-{\em associahedron\/} (see \cite{fom}). This polytope has $C_{n+1}$ vertices, one for each triangulation of $N$. The vertex-set of a typical $j$-face of $\mathcal{A}_n$ consists of all those vertices of $\mathcal{A}_n$ that, when viewed as triangulations, have a certain set of $n-j$ diagonals in common. In other words, the vertex-set of a typical $j$-face can be found from any of its vertices $t$ by fixing $n-j$ diagonals of $t$ and allowing all other diagonals to flip. 

The $2$-associahedron $\mathcal{A}_2$ is a pentagon. The $3$-associahedron $\mathcal{A}_3$ is a simple convex $3$-polytope with $6$ pentagonal faces and $3$ square faces (see Figure~\ref{aso6}). 

\begin{figure}[htbp]
\begin{center}
\includegraphics[width=55mm]{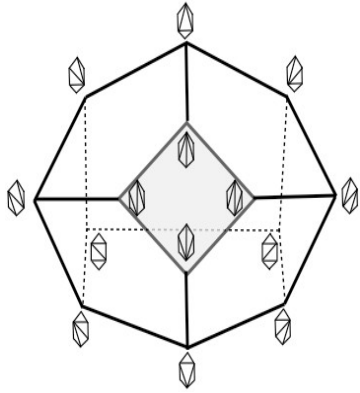} \\[-.2in]
\caption{\label{aso6}The 3-dimensional associahedron $\mathcal{A}_3$}
\end{center}
\end{figure}

The construction of the colorful associahedron proceeds in a similar way. However now, as an essential new feature, the diagonals of a triangulation (not the edges on the circumference of the underlying polygon) are colored and adjaceny of triangulations requires color preserving flips. 

As before, let $N$ be a convex $(n+3)$-gon with vertices $1,2,\ldots,n+3$. A ({\em diagonal-}) {\em colored triangulation\/} of $N$ is an ordinary triangulation of $N$ in which the diagonals are assigned a {\em color\/} from an $n$-set $C(N)$, the {\em color set\/}, such that any two distinct diagonals have distinct colors. Let $\Delta_{n}^{c}$ denote the set of all colored triangulations of $N$. Every ordinary triangulation of $N$ gives rise to $n!$ colored triangulations. Hence
\begin{equation}
\label{colCatalan}
|\Delta_{n}^c| = n!\cdot |\Delta_{n}| = n!\cdot C_{n+1} .
\end{equation}
This number $n!\cdot C_{n+1}$ is known as the $(n+1)$-st pseudo Catalan number (see~\cite{brualdi}). If $t$ is a colored triangulation and $d$ is any of its colored diagonals, we let $supp(t)$ and $supp(d)$, respectively, denote the underlying uncolored triangulation and uncolored diagonal of $N$, and call them the {\em support\/} of $t$ and~$d$. 

From now on, when the context is that of colored triangulations, we use the term ``diagonal'' without qualifications to mean ``colored diagonal''.  

If $t$ is a colored triangulation of $N$ and $d$ is a diagonal of $t$, then a new colored triangulation $t'$ (with a new diagonal $d'$) can be constructed from $t$ by {\em flipping\/} $d$ (while preserving the color of $d$). Here ``flipping'' is defined in the same way as before but now for colored diagonals. These flipping operations again determine a ({\em colorful\/}) {\em exchange graph\/}, now with vertex set $\Delta_{n}^{c}$ and denoted by $\mathcal{G}^{c}_n$. Two vertices $t$ and $t'$ of $\mathcal{G}^{c}_n$ are {\em adjacent\/} if and only if, when viewed as colored triangulations, $t'$ is obtained from $t$ by flipping a (colored) diagonal of $t$. As each triangulation has $n$ diagonals, it is clear that each vertex of $\mathcal{G}^{c}_n$ has 
degree~$n$. Moreover, if each edge of $\mathcal{G}^{c}_n$ is labeled with the color of the diagonal used in the flip to interchange the two vertices of the edge, then $\mathcal{G}^{c}_n$ becomes a properly edge colored graph with color set $R=C(N)$. Thus the following lemma holds. 

\begin{lemma}
\label{asociacolor}
$\mathcal{G}^{c}_n$ is an $n$-regular graph with chromatic index $\chi _1(\mathcal{G}^{c}_n)=n$.
\end{lemma}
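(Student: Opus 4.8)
The plan is to verify the two assertions separately: $n$-regularity, and then the exact value of the chromatic index. Most of the groundwork has in fact been laid in the paragraph preceding the statement, so the proof amounts to making those observations precise and then invoking the elementary bounds on the chromatic index recorded in Section~\ref{secbasnot}.

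First I would confirm that every vertex of $\mathcal{G}^{c}_n$ has degree exactly $n$. A vertex is a colored triangulation $t$ of $N$, and $t$ has precisely $n$ diagonals $d_1,\ldots,d_n$, carrying the $n$ distinct colors of $C(N)$. Flipping $d_i$ (while preserving its color) produces a colored triangulation $t^{(i)}$ adjacent to $t$ in $\mathcal{G}^{c}_n$, so $t$ has at least $n$ neighbors. To see there are no more and that these $n$ are distinct, I would argue at the level of supports: $supp(t^{(i)})$ is obtained from $supp(t)$ by deleting $supp(d_i)$ and inserting the other diagonal of the quadrilateral formed by the two triangles of $t$ meeting along $d_i$; in particular $supp(t^{(i)})$ does not contain $supp(d_i)$, whereas $supp(t^{(j)})$ does contain $supp(d_i)$ for $j\neq i$. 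Hence $t^{(i)}\neq t^{(j)}$ for $i\neq j$, and since any neighbor of $t$ arises from flipping one of its $n$ diagonals, $\deg(t)=n$.

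Next I would check that labeling each edge of $\mathcal{G}^{c}_n$ by the color of the diagonal flipped along it is a well-defined proper edge coloring with color set $R=C(N)$. Well-definedness: an edge $\{t,t'\}$ determines the flipped diagonal $d$ of $t$ uniquely by the distinctness argument above, and viewed from $t'$ the reverse flip uses the diagonal $d'$ of the same color as $d$, so both endpoints assign the edge the same color. Properness: two edges incident to a common vertex $t$ correspond to flipping two distinct diagonals of $t$, which by definition of a colored triangulation carry distinct colors. This exhibits a proper $n$-edge coloring, so $\chi_{1}(\mathcal{G}^{c}_n)\leq n$.

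Finally, since $\mathcal{G}^{c}_n$ is $n$-regular we have $\Delta(\mathcal{G}^{c}_n)=n$, and the general inequality $\Delta(\mathcal{G})\leq\chi_{1}(\mathcal{G})$ quoted in Section~\ref{secbasnot} gives $\chi_{1}(\mathcal{G}^{c}_n)\geq n$; combining the two bounds yields $\chi_{1}(\mathcal{G}^{c}_n)=n$. The only genuinely non-routine point is the support argument showing that distinct diagonals of $t$ flip to distinct triangulations — equivalently, that the $n$ flips at $t$ are honestly distinct — and once that is in hand the rest is bookkeeping together with the Vizing-type bounds.
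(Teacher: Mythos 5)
Your proposal is correct and follows essentially the same route as the paper, which states the lemma as an immediate consequence of the preceding observations (each triangulation has $n$ diagonals, hence degree $n$; labeling edges by the color of the flipped diagonal gives a proper $n$-edge coloring, and $n$-regularity forces $\chi_1 \geq n$). The only difference is that you spell out the well-definedness and distinctness details that the paper leaves implicit.
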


The three smallest values of $n$ give rise to the following exchange graphs. When $n=0$ the base polygon $N$ is just a triangle, so $\mathcal{G}^{c}_{0}$ has just one vertex and no edges. When $n=1$ then $N$ is a quadrilateral and $\mathcal{G}^{c}_{1}$ has two vertices and one edge joining them. The situation is more interesting when $n=2$. Then $N$ is a pentagon and $\mathcal{G}^{c}_{2}$ a $10$-cycle (see Figure~\ref{exchange2}). Any pair of diametrically opposite vertices in $\mathcal{G}^{c}_{2}$ represents a pair of triangulations which differ only in their colors; that is, the underlying uncolored triangulations are identical but the colors on the two diagonals are switched. In other words, $\mathcal{G}^{c}_2$ is a $2$-fold covering of $\mathcal{G}_2$. The graphs $\mathcal{G}_n$ and $\mathcal{G}^{c}_n$ are isomorphic only when $n=0$ or $1$; this follows directly from (\ref{colCatalan}).

\begin{figure}[htbp]
\begin{center}
\scalebox{0.25}{\qquad\qquad\qquad\qquad\includegraphics{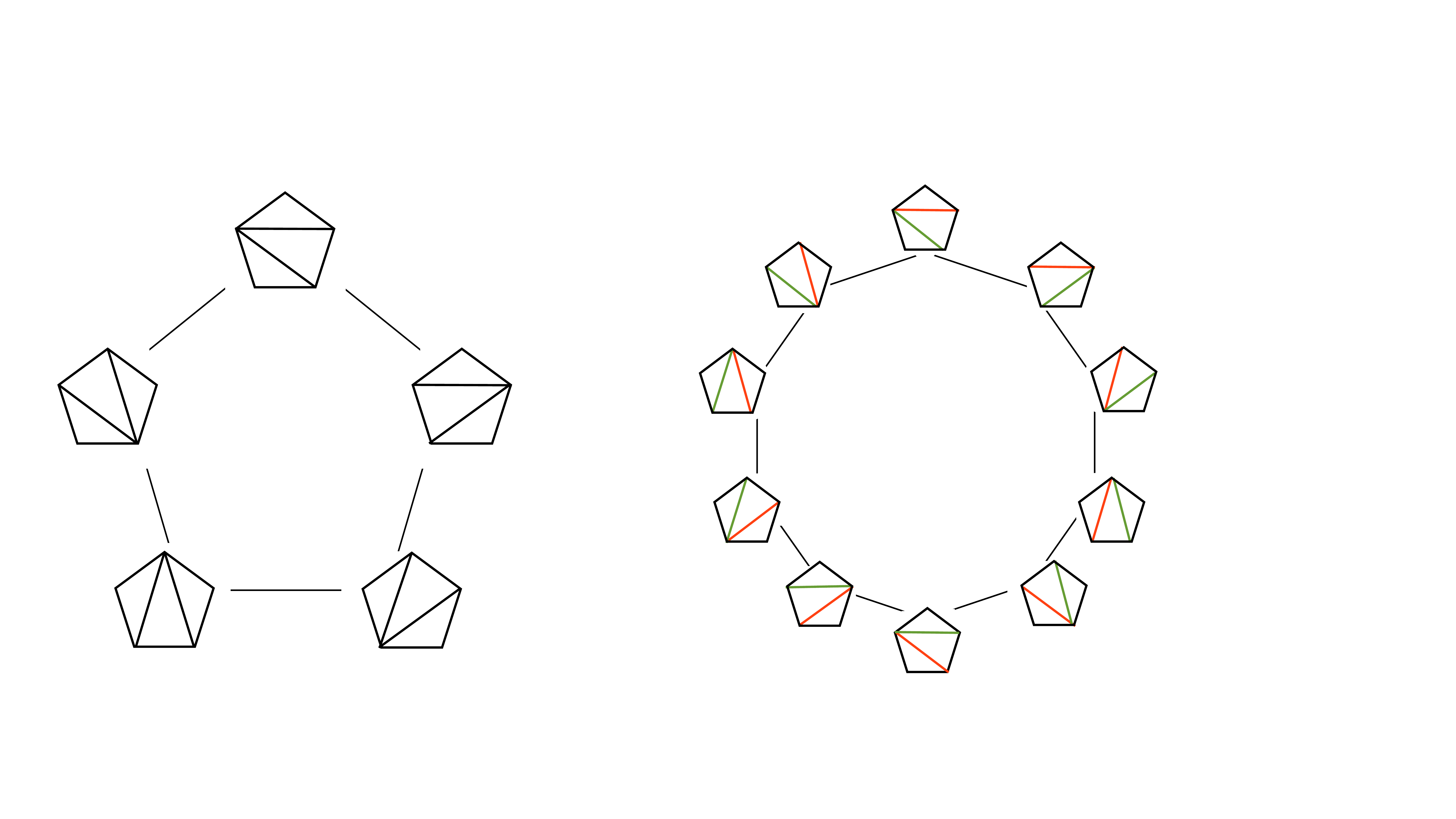}} \\[-.3in]
\caption{\label{exchange2} The exchange graphs $\mathcal{G}_{n}$ and $\mathcal{G}^{c}_{n}$ when $n=2$.}
\end{center}
\end{figure}

\begin{lemma}
\label{gconn}
$\mathcal{G}^{c}_n$ is connected for each $n$.
\end{lemma}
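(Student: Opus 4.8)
The plan is to decouple two kinds of moves in $\mathcal{G}^{c}_n$: ordinary flips, which can change the underlying uncolored triangulation, and closed sequences of flips that return to a prescribed uncolored triangulation while permuting the colors on its diagonals. The small cases $n\le 1$ are immediate ($\mathcal{G}^{c}_{0}$ is a single vertex and $\mathcal{G}^{c}_{1}$ a single edge), so assume $n\ge 2$. The first observation is that it suffices to prove the following: for each fixed triangulation $t_0\in\Delta_n$, all colored triangulations $t$ with $supp(t)=t_0$ lie in a single component of $\mathcal{G}^{c}_n$. Granting this, take arbitrary $s,t\in\Delta^{c}_n$; since $\mathcal{G}_n$ is connected (being the $1$-skeleton of the convex polytope $\mathcal{A}_n$), there is a sequence of diagonal flips carrying $supp(s)$ to $supp(t)$, and performing the same flips on $s$ --- at each stage flipping the diagonal with the prescribed support and keeping its color --- produces a path in $\mathcal{G}^{c}_n$ from $s$ to some $s'$ with $supp(s')=supp(t)$; the fixed-support statement then joins $s'$ to $t$.

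So fix $t_0$, with diagonals $e_1,\dots,e_n$. A colored triangulation with support $t_0$ is precisely a bijective assignment of the $n$ colors to $e_1,\dots,e_n$, and we must show every such assignment is reachable from a fixed one by color-preserving flips that begin and end at support $t_0$. Call two diagonals of $t_0$ \emph{companions} if they lie on a common triangle of $t_0$. The graph on $\{e_1,\dots,e_n\}$ whose edges are the companion pairs is exactly the line graph of the dual tree of $t_0$ (nodes $=$ triangles, edges $=$ diagonals), and is therefore connected. Since the transpositions indexed by the edges of a connected graph on $n$ letters generate the full symmetric group $S_n$, it is enough to prove: whenever $e_i$ and $e_j$ are companions, the two colorings of $t_0$ that differ only by interchanging the colors of $e_i$ and $e_j$ are connected in $\mathcal{G}^{c}_n$.

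For this, let $T$ be a triangle of $t_0$ incident with both $e_i$ and $e_j$, and let $P$ be the union of $T$ with the two triangles of $t_0$ sharing $e_i$, respectively $e_j$, with $T$. These three triangles are pairwise distinct --- if the triangles across $e_i$ and $e_j$ coincided, then $T$, that triangle, $e_i$ and $e_j$ would exhibit a quadrilateral triangulated by two diagonals, which is impossible --- so $P$ is a convex pentagon whose induced triangulation consists of $T$ and two of its neighbors and uses exactly the diagonals $e_i,e_j$. Now freeze the remaining $n-2$ diagonals of $t_0$ together with their colors. The colored triangulations of $N$ obtained from $t_0$ by re-triangulating and re-coloring only inside $P$, using exactly the two original colors of $e_i$ and $e_j$ there, together with the flips internal to $P$, form a subgraph of $\mathcal{G}^{c}_n$ isomorphic to $\mathcal{G}^{c}_2$ (no color ever clashes with the frozen part, since the two colors circulating inside $P$ are disjoint from the $n-2$ frozen ones). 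By the structure of $\mathcal{G}^{c}_2$ recorded above --- a $10$-cycle that doubly covers $\mathcal{G}_2$, with diametrically opposite vertices representing triangulations that differ only in their colors --- the two colorings of $t_0$ related by the $e_i\leftrightarrow e_j$ swap are two vertices of this subgraph, and hence are connected by flips. This yields the companion-swap, and with it the fixed-support statement and the lemma.

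The step I expect to require the most care is the pentagon gadget of the last paragraph: verifying that the three triangles are genuinely distinct (so $P$ is an honest convex pentagon, not a degenerate configuration), that the flips realizing the swap stay confined to $P$, and that the resulting subgraph of $\mathcal{G}^{c}_n$ is a faithful copy of $\mathcal{G}^{c}_2$. Everything else is standard: connectivity of $\mathcal{G}_n$ is connectivity of a polytope graph, connectivity of the companionship graph is connectivity of the line graph of a tree, and the reduction to companion swaps is the classical fact that transpositions along a connected graph generate the symmetric group.
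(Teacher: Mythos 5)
Your proof is correct, but its core runs along a genuinely different track from the paper's. Both arguments begin identically: lift a flip path in the ordinary exchange graph $\mathcal{G}_n$ to reduce to two colored triangulations with the same support, and then reduce further to realizing transpositions of colors via the generation of $S_n$ by transpositions. From there the paper proceeds by induction on $n$: a transposition involving a long diagonal is handled by truncating the $(n+3)$-gon with a short diagonal and invoking connectivity of $\mathcal{G}^{c}_{n-1}$, and the case of two short diagonals is settled either by direct inspection of the hexagon or by conjugating through a long diagonal. You avoid the induction and the short/long case split entirely: you observe that it suffices to realize swaps of colors on \emph{companion} diagonals (two diagonals sharing a triangle), because the companion graph is the line graph of the dual tree of the triangulation and hence connected, and transpositions along the edges of a connected graph generate the full symmetric group; each companion swap is then realized inside the convex pentagon formed by the shared triangle and its two neighbors across the two diagonals, where the local flip graph is the $10$-cycle $\mathcal{G}^{c}_2$ and antipodal vertices differ exactly by the color swap. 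Your checks that the three triangles are distinct (so the gadget really is a pentagon), that flips of its internal diagonals never disturb the frozen part, and that the resulting $10$ colored triangulations form a genuine $10$-cycle in $\mathcal{G}^{c}_n$ all hold up. What your route buys is a purely local, induction-free argument whose only nontrivial input beyond connectivity of $\mathcal{G}_n$ is the $n=2$ picture; what the paper's route buys is the truncation-plus-induction template that it reuses almost verbatim in the later automorphism-group arguments.
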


\begin{proof}
The proof is based on an induction argument. The above examples show that the cases $n=0,1,2$ are trivial, so let $n\geq 3$. Suppose that $\mathcal{G}^{c}_k$ is connected for each $k$ with $k<n$.

Now let $t$ and $s$ be two vertices of $\mathcal{G}^{c}_n$. Clearly, since the uncolored graph $\mathcal{G}_n$ is connected, we can find an edge path in $\mathcal{G}_n$ that joins $supp(t)$ and $supp(s)$. The edges in this path correspond to flips of diagonals in uncolored triangulations. When the very same flips are performed on the corresponding colored triangulation $t$ and its successors to give an edge path in $\mathcal{G}^{c}_n$, we see that the original vertex $t$ of $\mathcal{G}^{c}_n$ can be joined in $\mathcal{G}^{c}_n$ to a new vertex with support $supp(s)$. This new vertex will generally be different from $s$ but does have the same support as $s$. These arguments show that we may assume from now on that $t$ and $s$ have the same support.

Now suppose $t$ and $s$ have the same underlying triangulation but possibly different colorings of the diagonals. Identifying the $n!$ possible colorings on this triangulation with the elements of the symmetric group $S_n$, and using the fact that $S_n$ is generated by transpositions, we see that any coloring can be moved into any other coloring by successively switching exactly two colors at a time. If we can realize any such switch of two colors by a path between vertices in $\mathcal{G}^{c}_n$, then the concatenation of these paths in $\mathcal{G}^{c}_n$ gives a path joining $t$ and~$s$. Hence it suffices to assume from now on that $t$ and $s$ have the same support and that their colorings differ in only two positions, namely in the colors of the diagonals $d_1$ and $d_2$ (say) of $supp(t)=supp(s)$. 

If $d_1$ or $d_2$ is a long diagonal of $supp(t)$, we can truncate $N$ at some vertex $j$ by a short diagonal (colored differently than $d_1$ and $d_2$) to obtain an $(n+2)$-gon $\widehat{N}$, such that $t$ and $s$, respectively, induce colored triangulations $\widehat{t}$ and $\widehat{s}$ of $\widehat{N}$ which still have $d_1$ and $d_2$ as diagonals. Note here that $n\geq 3$. By induction applied to the graph $\mathcal{G}^{c}_{n-1}$ for $\widehat{N}$, we can connect the two corresponding vertices $\widehat{t}$ and $\widehat{s}$ by an edge path in $\mathcal{G}^{c}_{n-1}$. Now, if we append to each triangulation in this path the triangle that had been cut off from $N$ at $j$, we obtain an edge path in the original graph $\mathcal{G}^{c}_{n}$ joining $t$ and $s$. Thus the vertices $t$ and $s$ can be connected in~$\mathcal{G}^{c}_{n}$.

It remains to consider the case when both $d_1$ and $d_2$ are short diagonals of $supp(t)$. Now bear in mind that $n\geq 3$. In the special case when $n=3$ and $supp(t)$ is a triangulation of the hexagon with only short diagonals, we can directly appeal to Figure~\ref{exchange2} to see that $t$ and $s$ are connected in~$\mathcal{G}^{c}_{n}$; in fact, truncating the hexagon by the diagonal of $supp(t)$ distinct from $d_1$ and $d_2$ results in a pentagon with two induced colored triangulations connected in $\mathcal{G}^{c}_5$. In all other instances there is at least one long diagonal, $d_3$ (say), in $supp(t)$, and so we can first switch the colors on $d_1$ and $d_3$, then on $d_3$ and $d_2$, and finally again on $d_1$ and $d_3$. By the previous case, each switch can be realized by an edge path between the corresponding vertices of $\mathcal{G}^{c}_{n}$. Hence the concatenated paths give an edge path joining $t$ and $s$ in $\mathcal{G}^{c}_{n}$. Thus $\mathcal{G}^{c}_{n}$ is connected.
\end{proof}

The two previous lemmas show that $\mathcal{G}^{c}_{n}$ is a connected properly edge-colored $n$-regular graph. Then the following theorem is a direct consequence of Theorem~\ref{colpo}.

\begin{theorem}
\label{colorpolytope}
The exchange graph $\mathcal{G}_{n}^c$ given by a convex $(n+3)$-gon is the $1$-skeleton of a simple abstract polytope of rank $n$, called the {\em colorful $n$-associahedron\/} $\mathcal{A}^{c}_n$.
\end{theorem}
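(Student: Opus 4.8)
The plan is simply to verify that $\mathcal{G}^{c}_{n}$ meets every hypothesis of Theorem~\ref{colpo} and then invoke that theorem. Recall that Theorem~\ref{colpo} requires a finite, connected, properly edge colored, $n$-regular graph. Each of these four properties has already been secured for $\mathcal{G}^{c}_{n}$ in the preceding discussion, so the argument amounts to assembling them.

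First, finiteness: by~(\ref{colCatalan}) the vertex set $\Delta_{n}^{c}$ of $\mathcal{G}^{c}_{n}$ has cardinality $n!\cdot C_{n+1}$, which is finite, so $\mathcal{G}^{c}_{n}$ is a finite graph. Next, $n$-regularity and the proper edge coloring: every colored triangulation of $N$ has exactly $n$ diagonals, each of which can be flipped in exactly one way while preserving its color, so each vertex of $\mathcal{G}^{c}_{n}$ has degree $n$; labelling each edge by the color of the diagonal flipped along it gives a coloring in which the $n$ edges meeting at a vertex $t$ receive the $n$ distinct colors carried by the $n$ diagonals of $t$, hence adjacent edges are colored differently. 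This is exactly the content of the paragraph preceding Lemma~\ref{asociacolor} together with Lemma~\ref{asociacolor} itself, which records that $\mathcal{G}^{c}_{n}$ is $n$-regular with $\chi_1(\mathcal{G}^{c}_{n})=n$; thus the coloring just described uses $\chi_1(\mathcal{G}^{c}_{n})$ colors and $\mathcal{G}^{c}_{n}$ is a properly edge colored $n$-regular graph in the sense of Section~\ref{secbasnot}. Finally, connectivity is precisely Lemma~\ref{gconn}.

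With all hypotheses in hand, Theorem~\ref{colpo} applies verbatim and yields a simple abstract polytope of rank $n$ whose $1$-skeleton is $\mathcal{G}^{c}_{n}$; we name it the colorful $n$-associahedron $\mathcal{A}^{c}_{n}$. For later use one may also record the explicit face structure handed down by Theorem~\ref{colpo}: for a $j$-subset $C$ of the color set $C(N)$ and a colored triangulation $t$, the $j$-faces of $\mathcal{A}^{c}_{n}$ are the classes $(C,t)$ of the relation $\sim_C$ on $\Delta_{n}^{c}$, with incidence given by inclusion of vertex sets; concretely, two colored triangulations lie in the same $j$-face determined by $C$ precisely when one is reachable from the other through a succession of color preserving flips of diagonals whose colors lie in $C$.

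I do not expect any genuine obstacle here: the theorem is an immediate corollary, and the one step that required real work---establishing connectivity of $\mathcal{G}^{c}_{n}$---has already been carried out in Lemma~\ref{gconn}. The only point deserving a moment's care is the verification that the edge coloring by flipped-diagonal colors is genuinely proper, i.e.\ that the $n$ diagonals of a single triangulation really do carry $n$ distinct colors; but this is built into the definition of a colored triangulation, where any two distinct diagonals are required to have distinct colors.
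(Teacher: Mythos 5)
Your proposal is correct and matches the paper's own argument: the paper likewise deduces the theorem as a direct consequence of Theorem~\ref{colpo}, citing Lemmas~\ref{asociacolor} and~\ref{gconn} to establish that $\mathcal{G}^{c}_{n}$ is a connected properly edge colored $n$-regular graph. Your additional remarks on finiteness and the explicit face structure are consistent with the surrounding text and introduce no new issues.
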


Recall that the vertices of $\mathcal{A}^{c}_n$ are just those of the graph $\mathcal{G}_{n}^c$ (that is, all colored triangulations of $N$), so $\mathcal{A}^{c}_n$ has exactly $n!\,C_{n+1}$ vertices. Moreover, since $\mathcal{A}^{c}_n$ is simple and has rank $n$, it has exactly $(n!)^{2}\,C_{n+1}$ flags. (There are $n!$ flags containing a given vertex.) For $j=0,\ldots,n$, a typical $j$-face $F$ of $\mathcal{A}^{c}_n$ can be represented as a pair $(C,t)$, where $t$ is a colored triangulation of $N$ and $C$ is a $j$-subset of the underlying color set $C(N)$. In other words, $F$ (or rather, its vertex set) is the equivalence class of $t$ under the equivalence relation $\sim_C$ defined at the end of Section~\ref{secbasnot}; that is, $F$ consists of all vertices of $\mathcal{G}_{n}^c$ that can be connected to $t$ by an edge path in $\mathcal{G}_{n}^c$ that uses only edges with colors in $C$. Thus the vertices, edges, and $2$-faces of $\mathcal{A}^{c}_n$ have representations of the form $(\emptyset,t)$, $(\{c\},t)$, and $(\{c,c'\},t)$, where $t$ is a vertex of $\mathcal{G}_{n}^c$ and $c,c'\in C(N)$, $c'\neq c$. In particular, a  vertex $(\emptyset,t)$ can be identified with $t$; an edge $(\{c\},t)$ is given by $\{t,t'\}$, where the triangulation $t'$ is obtained from $t$ by flipping the diagonal of color $c$; and a $2$-face $(\{c,c'\},t)$ consists of all those triangulations obtained from $t$ by flipping only diagonals with colors $c$ or~$c'$. 

The case $n=3$ describes the triangulations of the hexagon. The ordinary associahedron $\mathcal{A}_3$ is the convex polyhedron shown in Figure~\ref{aso6}. It has 14 vertices and 9 faces, namely 6 pentagons and 3 quadrilaterals. The colorful associahedron  $\mathcal{A}^{c}_3$ is a tessellation with $3$-valent vertices on an orientable surface of genus $4$, with $18$ decagonal and $18$ quadrilateral faces. It has $84$ vertices, of which $12$ are surrounded by three decagons and $72$ by two decagons and one quadrilateral. 
 
\section{Covering relationship between $\mathcal{A}^{c}_n$ and $\mathcal{A}_n$}
\label{cov}

In this section we show that the ordinary associahedron $\mathcal{A}_n$ is a quotient of the colorful associahedron $\mathcal{A}^{c}_n$ by a suitable subgroup of its automorphism group $\Gamma(\mathcal{A}^{c}_n)$. We already observed this relationship for $n=2$, where $\mathcal{A}^{c}_2$ is a decagon doubly covering $\mathcal{A}_2$, a pentagon (see Figure~\ref{exchange2}).

Consider the symmetric group $S_n$, viewed as acting on the underlying $n$-element color set $C(N)$. Then each permutation $\sigma\in S_n$ induces an automorphism of 
$\mathcal{A}^{c}_n$, again denoted by $\sigma$, defined on a face $(C,t)$ of $\mathcal{A}^{c}_n$ by
\begin{equation}
\label{actionsn}
\sigma((C,t)) := (\sigma(C),t_\sigma),
\end{equation}
where $t_\sigma$ denotes the colored triangulation (with the same support as $t$) obtained from $t$ by replacing each color $c$ by $\sigma(c)$ for $c\in C(N)$. It is straightforward to check that $\sigma$ respects the partial order on $\mathcal{A}^{c}_n$, so it is indeed an automorphism of $\mathcal{A}^{c}_n$. The subgroup of $\Gamma(\mathcal{A}^{c}_n)$ arising in this way is again denoted by $S_n$. 

Now consider the quotient $\mathcal{A}^{c}_n/S_n$ of the colorful associahedron $\mathcal{A}^{c}_n$ defined by the subgroup $S_n$ of its automorphism group $\Gamma(\mathcal{A}^{c}_n)$ (see \cite[2D]{McMS02} for the basic properties of quotients). Its set of faces consists of the orbits of faces of $\mathcal{A}^{c}_n$ under $S_n$; any two such orbits give incident faces of the quotient if and only if they can be represented as orbits of two incident faces of~$\mathcal{A}^{c}_n$. 

Now consider the orbit of a face $(C,t)$ of $\mathcal{A}^{c}_n$ under $S_n$; this consists of all faces $(\sigma(C),t_\sigma)$ of $\mathcal{A}^{c}_n$ with $\sigma\in S_n$. Clearly, $supp(t_\sigma)=supp(t)$ for each $\sigma\in S_n$, so all faces in the orbit of $(C,t)$ have $supp(t)$ as underlying uncolored triangulation. Moreover, since the vertex-set of $(C,t)$ is just the set of colored triangulations obtained from $t$ by performing only flips of diagonals with colors from $C$, we see that we can just as well think of the vertices of the face $(\sigma(C),t_\sigma)$ as the set of images of exactly these colored triangulations under the automorphism $\sigma$ of $\mathcal{A}^{c}_n$. In other words, $C$ and $\sigma(C)$ really determine the same sets of uncolored diagonals kept fixed in $supp(t)=supp(t_\sigma)$ when flipping operations are performed to construct $(C,t)$ or $(\sigma(C),t_\sigma)$, respectively; these uncolored diagonals are just those that carry the colors that are not in $C$ or not in $\sigma(C)$, respectively. Thus we may think of the quotient $\mathcal{A}^{c}_n/S_n$ as the ordinary associahedron~$\mathcal{A}_n$. 

In summary we could rephrase this by saying that {\em going colorblind on the colorful associahedron yields the ordinary associahedron\/}. More formally, $\mathcal{A}_n$ is a quotient of $\mathcal{A}^{c}_n$, or equivalently, $\mathcal{A}^{c}_n$ covers $\mathcal{A}_n$. In particular, we have proved the following theorem.

\begin{theorem}
\label{colorblind}
$\mathcal{A}^{c}_n/S_n \cong \mathcal{A}_n$.
\end{theorem}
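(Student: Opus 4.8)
The plan is to make precise the informal argument already sketched in the paragraphs preceding the statement: we exhibit an explicit bijection between the faces of the quotient $\mathcal{A}^{c}_n/S_n$ and the faces of $\mathcal{A}_n$, and check that it preserves incidence and rank in both directions. Recall that a face of the quotient is an $S_n$-orbit of faces of $\mathcal{A}^{c}_n$, with incidence induced from $\mathcal{A}^{c}_n$, while a $j$-face of $\mathcal{A}_n$ is determined by an uncolored triangulation $supp(t)$ together with a choice of $n-j$ diagonals of $supp(t)$ held fixed (equivalently, a $j$-subset of the set of diagonals allowed to flip). So I would define a map $\Psi$ sending the $S_n$-orbit of a face $(C,t)$ of $\mathcal{A}^{c}_n$ to the $j$-face of $\mathcal{A}_n$ whose vertex set is $\{supp(s) : s \text{ a vertex of } (C,t)\}$, i.e.\ the face of $\mathcal{A}_n$ obtained from $supp(t)$ by fixing exactly the $n-j$ (uncolored) diagonals of $supp(t)$ that carry the colors in $C(N)\setminus C$.

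The key steps are: (1) \emph{Well-definedness.} Show $\Psi$ does not depend on the choice of representative $(C,t)$ of the orbit. Using~(\ref{actionsn}), the orbit of $(C,t)$ consists of the faces $(\sigma(C),t_\sigma)$; since $supp(t_\sigma)=supp(t)$ and $\sigma$ permutes colors, the diagonal of $t_\sigma$ coloured by a colour outside $\sigma(C)$ is exactly the diagonal of $t$ coloured by the corresponding colour outside $C$, so the set of fixed \emph{uncolored} diagonals, hence the resulting face of $\mathcal{A}_n$, is the same. (2) \emph{Surjectivity.} Given any $j$-face of $\mathcal{A}_n$, pick an uncolored triangulation in it and any $j$-subset of its flippable diagonals; colour the $n$ diagonals of that triangulation arbitrarily with $C(N)$, let $C$ be the set of colours on the chosen flippable diagonals, and note $\Psi$ of the orbit of $(C,t)$ is the prescribed face. (3) \emph{Injectivity.} If $\Psi$ sends the orbits of $(C,t)$ and $(C',t')$ to the same face of $\mathcal{A}_n$, then $supp(t)$ and $supp(t')$ lie in a common face of $\mathcal{A}_n$, so they share the $n-j$ fixed diagonals and differ only by flips among the other $j$; moreover the vertex sets of $(C,t)$ and $(C',t')$, viewed via $supp$, cover the same set of uncolored triangulations. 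One then produces a $\sigma\in S_n$ carrying $(C,t)$ to $(C',t')$ by matching up colours: first adjust $t'$ by an element of $S_n$ so that it has the same colouring as some vertex of $(C,t)$ with support $supp(t')$ (possible because within a face all diagonals not held fixed get to flip, so the vertices realize all colourings compatible with connectivity), which forces $C=\sigma(C)$ as well. (4) \emph{Incidence and rank.} Check that $\Psi$ and its inverse preserve the partial order: $(C,t)\le (C'',t)$ in $\mathcal{A}^{c}_n$ iff $C\subseteq C''$ and the equivalence classes nest, which under $\Psi$ corresponds exactly to one set of fixed uncolored diagonals containing the other, i.e.\ face containment in $\mathcal{A}_n$; rank is preserved since $|C|=j$ on both sides.

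I would organize the write-up around the map $\Psi$ and these four verifications, with (1) and (2) being essentially immediate from~(\ref{actionsn}) and the description of faces of $\mathcal{A}^{c}_n$ and $\mathcal{A}_n$ recalled in Sections~\ref{secbasnot} and~\ref{defcolas}. The main obstacle is step (3), injectivity: the subtlety is that two faces $(C,t)$ and $(C',t')$ of $\mathcal{A}^{c}_n$ in distinct $S_n$-orbits could a priori map to the same face of $\mathcal{A}_n$, and ruling this out requires knowing that \emph{every} colouring of the non-fixed diagonals is actually realized among the vertices of a given face $(C,t)$ — which is exactly where the connectedness established in Lemma~\ref{gconn}, applied to the sub-exchange-graph on the relevant smaller polygon obtained by contracting the fixed diagonals, is needed. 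Once that realizability is in hand, producing the required permutation $\sigma$ and checking $\sigma(C,t)=(C',t')$ is routine bookkeeping. Everything else — well-definedness, surjectivity, and the order-preservation in both directions — is a direct translation between the pair-notation $(C,t)$ for faces of $\mathcal{A}^{c}_n$ and the "fixed-diagonals" description of faces of $\mathcal{A}_n$.
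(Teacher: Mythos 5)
Your overall strategy is exactly the paper's: the proof given there is precisely the informal discussion preceding the theorem, which identifies the $S_n$-orbit of a face $(C,t)$ with the face of $\mathcal{A}_n$ determined by $supp(t)$ together with the uncolored diagonals carrying the colors of $C(N)\setminus C$. Your map $\Psi$ makes that identification explicit, and your steps (1), (2) and (4) are the routine verifications the paper leaves implicit.

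The one genuine problem is in your treatment of injectivity. The claim you isolate as the crux --- that \emph{every} colouring of the non-fixed diagonals by the colours of $C$ is realized among the vertices of the face $(C,t)$ --- is false, and Lemma~\ref{gconn} applied to ``the smaller polygon obtained by contracting the fixed diagonals'' cannot establish it: the $n-|C|$ fixed diagonals dissect $N$ into several regions, a flip never moves a diagonal out of its region, and hence a colour of $C$ can never migrate across a fixed diagonal. Concretely, for $n=3$ let $t$ be the fan triangulation of the hexagon at vertex $1$ with the long diagonal $\{1,4\}$ carrying the missing colour $c_3$, and with $\{1,3\}\mapsto c_1$, $\{1,5\}\mapsto c_2$; the $2$-face $(\{c_1,c_2\},t)$ is a square whose four vertices all have $c_1$ on the side of $\{1,4\}$ containing vertex $2$, so the colouring of $supp(t)$ with $c_1$ and $c_2$ interchanged is \emph{not} a vertex of this face. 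Fortunately injectivity does not need the false claim, only the weaker true one that $(C,t)$ contains \emph{some} vertex over each support occurring in $\Psi((C,t))$ (obtained by lifting an uncoloured flip path). Indeed, if $(C,t)$ and $(C',t')$ have the same image, they determine the same set $D$ of fixed uncoloured diagonals, and $(C,t)=(C,t'')$ for some vertex $t''$ with $supp(t'')=supp(t')$; since $t''$ and $t'$ are two colourings of one and the same triangulation, there is a unique $\sigma\in S_n$ with $t''_\sigma=t'$, and $\sigma$ carries the colours of $t''$ off $D$ (namely $C$) to those of $t'$ off $D$ (namely $C'$), whence $\sigma((C,t))=(\sigma(C),t''_\sigma)=(C',t')$. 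In the hexagon example the face with the swapped colouring is simply a \emph{different} face of $\mathcal{A}^{c}_3$ lying in the same $S_3$-orbit. With this repair (and correcting the typo ``$C=\sigma(C)$'', which should read $\sigma(C)=C'$), your write-up goes through and coincides with the paper's argument.
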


\section{The automorphism group of $\mathcal{A}^{c}_n$}
\label{autcolas}

In this section we describe the automorphism group $\Gamma(\mathcal{A}^{c}_n)$ of the colorful associahedron $\mathcal{A}^{c}_n$ and relate it to the automorphism group $\Gamma(\mathcal{A}_n)$ of the ordinary associahedron $\mathcal{A}_n$. We first establish in Lemma~\ref{combaut} that $\Gamma(\mathcal{A}_n)$ is isomorphic to $D_{n+3}$, the automorphism group $\Gamma(N)$ of the underlying $(n+3)$-gon $N$ (see also Ceballos, Santos \& Ziegler~\cite{csz} for a different proof of this fact). Then, for the colorful associahedron, the subgroup $S_n$ of $\Gamma(\mathcal{A}^{c}_n)$ also comes into play. In fact, we already observed in (\ref{actionsn}) that $S_n$ can be viewed as a subgroup of $\Gamma(\mathcal{A}^{c}_n)$. We show in Theorem~\ref{colassocgroup}  that $\Gamma(\mathcal{A}^{c}_n)$ is the direct product of this subgroup $S_n$ and a subgroup isomorphic to $D_{n+3}=\Gamma(\mathcal{A}_n)$. 

The ordinary $n$-associahedron $\mathcal{A}_n$ contains exactly $n+3$ facets isomorphic to the $(n-1)$-associahedron $\mathcal{A}_{n-1}$, as can be seen as follows. For $j=1,\ldots,n+3$, let $N_j$ denote the $(n+2)$-gon obtained from $N$ by truncating vertex $j$. Each $N_j$ determines a facet $F_j$ of $\mathcal{A}_n$ isomorphic to $\mathcal{A}_{n-1}$ whose vertex-set consists of the triangulations of $N$ obtained from those of $N_j$ by adjoining the triangle $\{j-1,j,j+1\}$ (indices considered mod $n+3$). Moreover, every facet of $\mathcal{A}_n$ isomorphic to $\mathcal{A}_{n-1}$ is necessarily of this kind; in fact, only facets of $\mathcal{A}_n$ determined by short diagonals on $N$ have the correct number of vertices, $C_n$, for isomorphism with $\mathcal{A}_n$. Thus there are $n+3$ such facets. Note that each facet $F_j$ is adjacent to (that is, shares an $(n-2)$-face with) each facet $F_k$ with $k\neq j\pm1$, but that $F_j$ does not have a vertex in common with $F_{j-1}$ and $F_{j+1}$. In fact, when $k\neq j\pm 1$ the two diagonals $\{j-1,j+1\}$ and $\{k-1,k+1\}$ of $N$ determine a common $(n-2)$-face of $F_j$ and $F_k$ isomorphic to $\mathcal{A}_{n-2}$, but when $k=j\pm 1$ these diagonals cannot occur simultaneously as edges in a triangulation of $N$.

\begin{lemma}
\label{threecons}
Let $n\geq 2$ and $\gamma\in\Gamma(\mathcal{A}_n)$. If $\gamma$ fixes two facets $F_j$ corresponding to adjacent vertices of $N$, then $\gamma$ is the identity automorphism.
\end{lemma}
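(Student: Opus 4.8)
The plan is to exploit the rigidity coming from the facet-adjacency structure of $\mathcal{A}_n$ described just before the lemma, together with the fact that $\Gamma(\mathcal{A}_n)$ acts on the $n+3$ special facets $F_1,\ldots,F_{n+3}$ (those isomorphic to $\mathcal{A}_{n-1}$) in the same way $D_{n+3}$ acts on the vertices of $N$. Suppose $\gamma\in\Gamma(\mathcal{A}_n)$ fixes $F_j$ and $F_{j+1}$ for some $j$ (indices mod $n+3$). Since $\gamma$ permutes the set $\{F_1,\ldots,F_{n+3}\}$ (it must send a facet isomorphic to $\mathcal{A}_{n-1}$ to another such facet, and there are exactly $n+3$ of them), this permutation preserves the ``non-adjacency'' relation: recall $F_i$ and $F_k$ fail to share a vertex precisely when $k=i\pm1$. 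Hence the induced permutation of indices is an automorphism of the cycle $C_{n+3}$ on $\{1,\ldots,n+3\}$, i.e.\ an element of $D_{n+3}$. An element of $D_{n+3}$ fixing two adjacent vertices $j$ and $j+1$ of the cycle is the identity, so $\gamma$ fixes every $F_k$, $k=1,\ldots,n+3$.

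Next I would upgrade ``$\gamma$ fixes all $n+3$ special facets'' to ``$\gamma$ is the identity automorphism of $\mathcal{A}_n$''. The cleanest route is induction on $n$. For $n=2$, $\mathcal{A}_2$ is a pentagon with five edges, each of which is one of the $F_j$ (a $1$-associahedron is a single edge), so fixing all five edges forces $\gamma=\mathrm{id}$; this is the base case. For the inductive step, first note that $\gamma$ fixes the $(n-2)$-face $F_j\cap F_k$ for every pair $k\neq j\pm1$ (it is the unique common $(n-2)$-face of the two fixed facets, since $\mathcal{A}_n$ is a lattice-like polytope in the relevant local sense). Restricting $\gamma$ to a fixed facet $F_j\cong\mathcal{A}_{n-1}$ gives an automorphism $\gamma_j$ of $\mathcal{A}_{n-1}$; under the identification of the special facets of $F_j$ with the $(n-2)$-faces $F_j\cap F_k$, the automorphism $\gamma_j$ fixes all of these, hence by the inductive hypothesis (applied via the adjacent-facet version of this very lemma, or directly once we know $\gamma_j$ fixes enough special facets of $\mathcal{A}_{n-1}$) we get $\gamma_j=\mathrm{id}$, so $\gamma$ fixes $F_j$ pointwise. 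As this holds for every $j$, and every vertex of $\mathcal{A}_n$ lies in at least one $F_j$ (equivalently, every triangulation of $N$ contains at least one short diagonal — true for $n\geq 1$), $\gamma$ fixes all vertices of $\mathcal{A}_n$. A polytope automorphism fixing every vertex is the identity (each face is determined by its vertex set), so $\gamma=\mathrm{id}$.

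The main obstacle I anticipate is the bookkeeping in the inductive step: one must check that when $\gamma$ fixes all special facets $F_k$ of $\mathcal{A}_n$, the restriction $\gamma_j$ to $F_j$ fixes \emph{enough} of the special facets of $F_j\cong\mathcal{A}_{n-1}$ to invoke the induction hypothesis — i.e.\ that the faces $F_j\cap F_k$ (for the valid $k$) are exactly the $n+2$ special $(n-2)$-faces of $F_j$, and that one can always find among them two corresponding to \emph{adjacent} vertices of the $(n+1)$-gon $N_j$. This is where the explicit combinatorics of truncation (the $(n+2)$-gon $N_j$ has vertices inherited from $N\setminus\{j\}$ plus the new short edge $\{j-1,j+1\}$) needs to be matched carefully against the index arithmetic mod $n+3$; once that dictionary is set up the argument closes. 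An alternative that avoids nested induction is to argue directly: a vertex $t$ of $\mathcal{A}_n$, viewed as a triangulation, is determined by the set of special facets containing it, namely $\{F_j : \{j-1,j+1\}\text{ is a diagonal of }t\}$ together with the full lattice structure, so $\gamma$ fixing all $F_j$ pins down each triangulation with at least one short diagonal, and a short induction on the number of long diagonals (removing a short diagonal / truncating) handles the rest — but this essentially reintroduces the same truncation bookkeeping, so I would present the facet-restriction induction as the primary argument.
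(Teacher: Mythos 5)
Your argument is correct and follows essentially the same route as the paper: first propagate the two fixed facets around the $(n+3)$-gon to conclude that $\gamma$ fixes every $F_k$, then induct on $n$ by restricting $\gamma$ to a fixed facet $F_j\cong\mathcal{A}_{n-1}$ and using the fixed intersections $F_j\cap F_k$ as fixed special facets of $F_j$. The one bookkeeping point you flag resolves as follows: the intersections $F_j\cap F_k$ yield only $n$ of the $n+2$ special facets of $F_j$ (those whose short diagonal in $N_j$ is also short in $N$), but for $n\geq 3$ two of these correspond to adjacent vertices of $N_j$, which is all the inductive hypothesis needs; the paper then finishes slightly more quickly than your all-vertices argument by noting that $\gamma$ fixes a flag through $F_1$ and is therefore the identity.
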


\begin{proof}
The group $\Gamma(\mathcal{A}_n)$ permutes the facets $F_1,\ldots,F_{n+3}$, since these are the only facets of $\mathcal{A}_n$ isomorphic to $\mathcal{A}_{n-1}$. Now suppose the two facets of $\mathcal{A}_n$ fixed by $\gamma$ are $F_1$, $F_2$ (say). Beginning with these two facets we then can work our way around the $(n+3)$-gon to show that $\gamma$ must fix every facet $F_j$ of the above kind. In fact, since $\gamma$ already fixes $F_2$ and $F_1$, it must also fix $F_3$, since $F_3$ is the only facet $F_j$ distinct from $F_1$ which does not meet $F_2$. Continuing in this fashion around the $(n+3)$-gon we then see that $\gamma$ must in fact fix every facet $F_{j}$. (We do not know at this point that $\gamma$ fixes all facets of $\mathcal{A}_n$.)

We now prove the lemma by induction on $n$. The statement is trivial for $n=2$, when $\mathcal{A}_2$ is a pentagon. (In this case $\gamma$ fixes all edges, so $\gamma$ is trivial.)

Now let $n \geq 3$. The facet $F_1$ is associated with the $(n+2)$-gon $N_1$ and its triangulations, and is isomorphic to $\mathcal{A}_{n-1}$.  Each of the vertices $2,\ldots,n+3$ of $N_{1}$ gives rise to a facet of $F_1$ isomorphic to $\mathcal{A}_{n-2}$. Let $(F_{1})_j$ denote the facet of $F_1$ for vertex $j$ associated with the $(n+1)$-gon $(N_{1})_{j}$ obtained by truncating $N_1$ at $j$.  Now, for $j=3,\ldots,n+2$, the two facets $F_1$ and $F_j$ of $\mathcal{A}_n$ are adjacent and meet precisely in this $(n-2)$-face $(F_{1})_j$. This observation allows us to complete the argument. In fact, since $\gamma$ fixes both $F_1$ and $F_j$, it must also fix their common $(n-2)$-face $(F_{1})_j$, for each such $j$. But $n\geq 3$, so $\gamma$ fixes such facets of $F_1$ corresponding to two adjacent vertices of $N_1$, for example, the vertices $3,4$. Hence, by the inductive hypothesis, the restriction of $\gamma$ to the facet $F_1$ is the identity automorphism on $F_1$. Now choose a flag $\Phi$ of $\mathcal{A}_n$ containing the facet $F_1$. As $\gamma$ is the identity on $F_1$, it must also fix the flag $\Phi$. Therefore, since automorphisms are uniquely determined by their effect on a single flag, $\gamma$ must necessarily be the identity automorphism of $\mathcal{A}_n$. 
\end{proof}

\begin{lemma}
\label{combaut} 
\ $\Gamma(\mathcal{A}_n) \cong D_{n+3}$
\end{lemma}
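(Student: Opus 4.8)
The plan is to exhibit a group homomorphism $\Gamma(N)\to\Gamma(\mathcal{A}_n)$, show it is injective using Lemma~\ref{threecons}, and then show it is surjective by a counting argument. First I would observe that any symmetry $\rho$ of the convex $(n+3)$-gon $N$ permutes the set of triangulations of $N$ and carries flips to flips, hence induces an automorphism $\hat\rho$ of the exchange graph $\mathcal{G}_n$; since $\mathcal{A}_n$ is the colorful polytope (actually just the simple polytope) with $1$-skeleton $\mathcal{G}_n$, this graph automorphism extends uniquely to an automorphism of $\mathcal{A}_n$ (it permutes the equivalence classes defining higher faces because it permutes the diagonals). This gives a homomorphism $D_{n+3}\cong\Gamma(N)\to\Gamma(\mathcal{A}_n)$, $\rho\mapsto\hat\rho$.

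Next I would check injectivity. If $\hat\rho$ is the identity automorphism of $\mathcal{A}_n$, then $\rho$ fixes every triangulation of $N$ setwise, and in particular fixes the set of all short diagonals, hence the vertex set of $N$ (each vertex $j$ is determined by the short diagonal $\{j-1,j+1\}$ it "cuts off", and a symmetry of $N$ fixing all short diagonals as a family must fix them pointwise for $n\ge 2$, forcing $\rho=\mathrm{id}$). Alternatively, and more in the spirit of the preceding lemma, if $\hat\rho$ fixes the two facets $F_j$ and $F_{j+1}$ for some $j$ then by Lemma~\ref{threecons} $\hat\rho=\mathrm{id}$; one then checks that a nontrivial $\rho\in D_{n+3}$ moving the vertices of $N$ cannot fix two facets $F_j,F_{j+1}$ corresponding to adjacent vertices, since $\hat\rho(F_j)=F_{\rho(j)}$. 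Either way $\rho\mapsto\hat\rho$ is injective, so $D_{n+3}$ embeds in $\Gamma(\mathcal{A}_n)$.

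For surjectivity I would argue as follows. Let $\gamma\in\Gamma(\mathcal{A}_n)$. As noted in the paragraph before Lemma~\ref{threecons}, $\gamma$ permutes the $n+3$ facets $F_1,\dots,F_{n+3}$ (they are the only facets isomorphic to $\mathcal{A}_{n-1}$), and it must preserve the adjacency pattern among them, namely that $F_j$ is \emph{non}-adjacent to exactly $F_{j-1}$ and $F_{j+1}$ (indices mod $n+3$); in other words $\gamma$ induces an automorphism of the complement of the cycle $C_{n+3}$ on the index set, equivalently of $C_{n+3}$ itself, which is $D_{n+3}$. Hence there is $\rho\in D_{n+3}$ with $\hat\rho$ inducing the same permutation of $\{F_1,\dots,F_{n+3}\}$ as $\gamma$. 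Then $\hat\rho^{-1}\gamma$ fixes all facets $F_j$, in particular two corresponding to adjacent vertices of $N$, so by Lemma~\ref{threecons} $\hat\rho^{-1}\gamma=\mathrm{id}$, i.e.\ $\gamma=\hat\rho$. Therefore $\rho\mapsto\hat\rho$ is an isomorphism and $\Gamma(\mathcal{A}_n)\cong D_{n+3}$.

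I expect the main obstacle to be the surjectivity step — specifically, establishing cleanly that a polytope automorphism $\gamma$ restricts to a combinatorial symmetry of the facet-adjacency structure that is governed exactly by the dihedral group (and not something larger), and handling the small cases $n=2,3$ where $\mathcal{A}_n$ is a pentagon or the polyhedron of Figure~\ref{aso6} and one must double-check that no extra automorphisms appear (e.g.\ the pentagon's $\mathcal{A}_2$ has automorphism group $D_5$, matching $D_{n+3}$ with $n=2$). The injectivity and the construction of the homomorphism are routine once one uses that an automorphism of $\mathcal{A}_n$ is determined by its action on a single flag, together with the observation that higher faces of $\mathcal{A}_n$ are encoded by sets of fixed diagonals of $N$, which $\rho$ permutes in the obvious way.
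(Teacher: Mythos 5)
Your proposal is correct and follows essentially the same route as the paper: embed $D_{n+3}=\Gamma(N)$ into $\Gamma(\mathcal{A}_n)$, observe that any automorphism permutes the distinguished facets $F_1,\ldots,F_{n+3}$ preserving the cyclic non-adjacency pattern (the paper phrases this as a two-step reduction modulo $D_{n+3}$ rather than via the automorphism group of the cycle, but it is the same argument), and then invoke Lemma~\ref{threecons} to conclude that the residual automorphism is trivial.
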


\begin{proof}
We know that $\Gamma(\mathcal{A}_n)$ permutes the facets $F_1,\ldots,F_{n+3}$. We prove that the permutation action of $\Gamma(\mathcal{A}_n)$ on $F_1,\ldots,F_{n+3}$ just corresponds to the standard action of the dihedral group $D_{n+3}$ on $1,\ldots,n+3$.

Clearly, the automorphism group $D_{n+3}$ of $N$ can be viewed as a subgroup of $\Gamma(\mathcal{A}_n)$ acting on the facets $F_1,\ldots,F_{n+3}$ in just the same way as $D_{n+3}$ does on $1,\ldots,n+3$.  By slight abuse of notation we also denote this subgroup of $\Gamma(\mathcal{A}_n)$ by $D_{n+3}$. Conversely, suppose $\gamma\in \Gamma(\mathcal{A}_n)$ and $\gamma(F_1)=F_j$ for some $j$. By applying (if need be) an automorphism of $\mathcal{A}_n$ from $D_{n+3}$ mapping $F_j$ back to $F_1$, we may assume that $j=1$, that is, $F_1$ is fixed by~$\gamma$. Since the facets $F_2$ and $F_{n+3}$ are the only facets $F_k$ which do not have a vertex in common with $F_1$, they are either fixed or interchanged by $\gamma$. Once again, by applying (if need be) an automorphism from $D_{n+3}$ we may further assume that $\gamma$ also fixes $F_2$ (and $F_{n+3}$). Now $\gamma$ fixes two facets associated with two adjacent vertices of $N$, namely $1$, $2$, and hence $\gamma$ is trivial by Lemma~\ref{threecons}. But now it is immediate that $\Gamma(\mathcal{A}_n)=D_{n+3}$; in fact, modulo $D_{n+3}$, each automorphism of $\mathcal{A}_n$ is equivalent to the identity.
\end{proof}

Proceeding with the colorful case we first observe that there are two distinguished subgroups of $\Gamma(\mathcal{A}^{c}_n)$. We already mentioned the subgroup $S_n$ acting as in~(\ref{actionsn}). The second subgroup is isomorphic to $\Gamma(N)=D_{n+3}$ and will again be denoted by $D_{n+3}$. In fact, every element $\gamma\in D_{n+3}$ naturally gives rise to an automorphism of $\mathcal{A}^{c}_n$ defined by
\begin{equation}
\label{groupan}
\gamma((C,t)) := (C,t_\gamma),
\end{equation}
and again denoted by $\gamma$, where $t_\gamma$ is the image of the colored triangulation $t$ under the automorphism $\gamma$ of $N$ (that is, $\gamma$ takes $supp(t)$ to $supp(t_\gamma)$ while preserving the colors on diagonals). Again it is straightforward to check that $\gamma$ is an automorphism of $\mathcal{A}^{c}_n$. 

First observe that $S_n$ centralizes $D_{n+3}$, that is, $\sigma\gamma=\gamma\sigma$ for all $\sigma\in S_n, \gamma\in D_{n+3}$. In fact, 
\[\sigma\gamma((C,t))=\sigma((C,t_\gamma))=(\sigma(C),(t_\gamma)_\sigma),\] 
and similarly,
\[ \gamma\sigma((C,t))=\gamma((\sigma(C),t_\sigma))=(\sigma(C),(t_\sigma)_\gamma),\] 
for all faces $(C,t)$ of $\mathcal{A}^{c}_n$. But $(t_\gamma)_\sigma=(t_\sigma)_\gamma$, since first applying an automorphism $\gamma$ of $N$ to $t$ and then permuting colors on diagonals of the resulting colored triangulation according to $\sigma$, is equivalent to first permuting colors on diagonals of $t$ according to $\sigma$ and then applying $\gamma$ to the resulting colored triangulation. 

Now since $S_{n}$ and $D_{n+3}$ intersect trivially, they generate a subgroup of $\Gamma(\mathcal{A}^{c}_n)$ isomorphic to $S_{n}\times D_{n+3}$. Our goal is to show that this subgroup is in fact $\Gamma(\mathcal{A}^{c}_n)$ itself.

We exploit the fact that $\mathcal{A}^{c}_n$ has exactly $n(n+3)$ facets isomorphic to $\mathcal{A}^{c}_{n-1}$, as can be seen as follows. Recall that $N_i$ denotes the $(n+2)$-gon obtained from $N$ by truncating vertex~$i$. Each such $N_i$ determines $n$ facets of $\mathcal{A}^{c}_n$ isomorphic to $\mathcal{A}^{c}_{n-1}$, one for each color $c\in C(N)$. We denote by $G_{i,c}$ the facet of $\mathcal{A}^{c}_n$ whose vertex-set consists of the triangulations of $N$ that have $\{i-1,i+1\}$ as a diagonal with color $c$; then $G_{i,c}$ corresponds to the set of triangulations of $N_i$ with diagonals colored from $C(N)\setminus\{c\}$. Furthermore, every facet of $\mathcal{A}^{c}_n$ isomorphic to $\mathcal{A}^{c}_{n-1}$ is determined by one such $(n+2)$-gon $N_i$ and one color $c$. Thus there are exactly $n(n+3)$ facets of $\mathcal{A}^{c}_n$ isomorphic to $\mathcal{A}^{c}_{n-1}$, namely the facets $G_{i,c}$.

Notice that an automorphism of $\mathcal{A}^{c}_n$ of the form $\sigma\gamma$, with $\sigma\in S_n$ and $\gamma\in D_{n+3}$, maps a facet $G_{i,c}$ of $\mathcal{A}^{c}_n$ to the facet $G_{\gamma(i),\sigma(c)}$. In particular, the elements of $D_{n+3}$ leave the second subscript $c$ of $G_{i,c}$ invariant, while those in $S_n$ keep the first subscript~$i$ fixed.

Define the families of facets 
\[ \mathcal{F}:=\{G_{i,c}\mid c\in C(N),i=1,\ldots,n+3\}\] 
and 
\[ \mathcal{F}_{i}:=\{G_{i,c}\mid c\in C(N)\}\quad (i=1,\ldots,n+3). \]

Note that, for any two distinct facets $G_{i,c}$ and $G_{j,c'}$, the number $K(i,c,j,c')$ of facets in $\mathcal{F}\setminus\{G_{i,c},G_{j,c'}\}$ that have both a vertex in common with $G_{i,c}$ and a vertex in common with~$G_{j,c'}$, is given by 
\begin{equation}
\label{Ks}
K(i,c,j,c') = \left\{ 
\begin{array}{ll}
(n-2)n & \mbox{if } j=i,\, c\neq c'\\ 
(n-1)(n-2) & \mbox{if } j=i\pm 1,\,c\neq c'\\
(n-1)^2 & \mbox{if }j=i\pm 1,\,c=c'\\
(n-2)^2 & \mbox{if }j=i\pm 2,\, c\neq c'\\
(n-1)(n-2) & \mbox{if }j=i\pm 2,\, c=c'\\
(n-2)(n-3) & \mbox{if }j\neq i,i\pm 1,i\pm 2,\, c\neq c'\\
(n-1)(n-3)& \mbox{if }j\neq i,i\pm 1,i\pm 2,\, c=c'
\end{array} 
\right.
\end{equation}
For example, for the first row, a facet $G_{k,c''}$ in $\mathcal{F}$ that has both a vertex in common with $G_{i,c}$ and a vertex in common with $G_{j,c'}=G_{i,c'}$, must have $k\neq i, i\pm 1$ and $c''\neq c,c'$. This gives $(n-2)((n+3)-3)=(n-2)n$ choices for $k$ and $c''$, and proves one direction of Lemma~\ref{useful}{b}; the other direction follows from a comparison of the numbers in~(\ref{Ks}). Note that the above relationships among facets in $\mathcal{F}$ (and hence the numbers in (\ref{Ks})) are preserved under automorphisms of $\mathcal{A}^{c}_n$. 

Notice in particular the following relationships among the facets in $\mathcal{F}$. The proofs are straightforward. 

\begin{lemma}
\label{useful}
Let $1\leq i,j\leq n+3$ and $c,c'\in C(N)$.\\[.01in]
(a) Two distinct facets $G_{i,c}$ and $G_{j,c'}$ share a common vertex if and only if $j\neq i,i\pm 1$ and~{$c\neq c'$}. In this case, $G_{i,c}$ and $G_{j,c'}$ are adjacent facets.\\[.01in]
(b) The number of facets in $\mathcal{F}\setminus\{G_{i,c},G_{j,c'}\}$ that have both a vertex in common with $G_{i,c}$ and a vertex in common with $G_{j,c'}$, is equal to $(n-2)n$ if and only if $i=j$ and $c\neq c'$.\\[.01in]
(c) No facet in $\mathcal{F}_i$ shares a vertex with a facet in $\mathcal{F}_j$ if and only if $|j-i|=1$.
\end{lemma}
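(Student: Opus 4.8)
The plan is to reduce everything to part~(a), which is the only part where the geometry of triangulations actually enters; parts~(b) and~(c) will then be pure bookkeeping built on~(a). Throughout I would use the fact that a vertex of the facet $G_{i,c}$ is exactly a colored triangulation $t$ of $N$ in which $\{i-1,i+1\}$ occurs as a diagonal of color $c$, so that a common vertex of $G_{i,c}$ and $G_{j,c'}$ is a colored triangulation carrying $\{i-1,i+1\}$ with color $c$ and $\{j-1,j+1\}$ with color $c'$ at the same time.

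For part~(a), the key step is to decide when the two short diagonals $\{i-1,i+1\}$ and $\{j-1,j+1\}$ can occur together in a triangulation of $N$. Comparing the cyclic positions of $i-1,i,i+1$ and of $j-1,j,j+1$ on $\partial N$, one checks that the two diagonals coincide precisely when $j\equiv i\pmod{n+3}$ and that they cross (so cannot coexist) precisely when $j\equiv i\pm1\pmod{n+3}$, while in every remaining case they are distinct and non-crossing and hence extend to a triangulation of $N$. Since distinct diagonals of a colored triangulation must carry distinct colors, $c\neq c'$ is also necessary; conversely, when $j\not\equiv i,i\pm1$ and $c\neq c'$ one colors the other $n-2$ diagonals of such a triangulation with the colors of $C(N)\setminus\{c,c'\}$ to obtain a common vertex. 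This yields the first assertion. For the adjacency statement I would fix a common vertex $t$ and invoke the connectedness of the colorful associahedra of the truncated polygons (Lemma~\ref{gconn}) to write $G_{i,c}=(C(N)\setminus\{c\},t)$ and $G_{j,c'}=(C(N)\setminus\{c'\},t)$; then $(C(N)\setminus\{c,c'\},t)$ is an $(n-2)$-face contained in both, so $G_{i,c}$ and $G_{j,c'}$ are adjacent.

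For part~(b), I would first note that, by~(a), a facet $G_{k,c''}\in\mathcal{F}$ other than $G_{i,c}$ and $G_{j,c'}$ has a common vertex with $G_{i,c}$ iff $k\notin\{i-1,i,i+1\}$ and $c''\neq c$, and with $G_{j,c'}$ iff $k\notin\{j-1,j,j+1\}$ and $c''\neq c'$ (the two excluded facets being avoided automatically once $k\notin\{i,j\}$). Hence the facets counted in~(b) are exactly the $G_{k,c''}$ with $k\notin\{i-1,i,i+1,j-1,j,j+1\}$ and $c''\notin\{c,c'\}$: there are $n-2$ admissible colors $c''$ if $c\neq c'$ and $n-1$ if $c=c'$, and there are $n$, $n-1$, $n-2$, or $n-3$ admissible indices $k$ according as $j\equiv i$, $j\equiv i\pm1$, $j\equiv i\pm2$, or none of these modulo $n+3$. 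Multiplying reproduces the table~(\ref{Ks}), and a direct comparison of its entries shows that the product $(n-2)n$ arises only in the first case, $j\equiv i$ with $c\neq c'$ (for $n\geq 3$ every other entry differs from $(n-2)n$; the degenerate small values of $n$ are checked by inspection). This proves~(b).

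Part~(c) is then a one-line consequence of~(a): if $|j-i|=1$ in the cyclic sense, then by~(a) $G_{i,c}$ and $G_{j,c'}$ have no common vertex for any $c,c'$, so no facet of $\mathcal{F}_i$ meets a facet of $\mathcal{F}_j$; conversely, if $j\not\equiv i\pm1$ then either $j\equiv i$ (so $\mathcal{F}_i=\mathcal{F}_j$) or $j\not\equiv i,i\pm1$, and in the latter case, choosing $c\neq c'$ (possible since $n\geq2$), the facets $G_{i,c}\in\mathcal{F}_i$ and $G_{j,c'}\in\mathcal{F}_j$ share a vertex by~(a). The main obstacle in this whole argument is the planar case analysis hidden in~(a) --- pinning down, with the cyclic indexing, exactly when the two short diagonals $\{i-1,i+1\}$ and $\{j-1,j+1\}$ coincide, cross, or can be completed to a common triangulation of $N$ --- together with the bookkeeping identifying $G_{i,c}$ with the representation $(C(N)\setminus\{c\},t)$ used for adjacency; once~(a) is established, (b) is a finite count and (c) is immediate.
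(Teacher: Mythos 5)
Your proposal is correct and follows essentially the same route as the paper, which derives the table (\ref{Ks}) by the very same count of admissible indices $k\notin\{i-1,i,i+1,j-1,j,j+1\}$ and colors $c''\notin\{c,c'\}$ and otherwise declares the proofs ``straightforward''; you have simply filled in the omitted details, namely the coexistence analysis of the short diagonals $\{i-1,i+1\}$ and $\{j-1,j+1\}$ and the identification $G_{i,c}=(C(N)\setminus\{c\},t)$ underlying the adjacency claim. The one caveat --- inherited from the paper rather than introduced by you --- is that the entry comparison giving the ``only if'' of (b) genuinely needs $n\geq 3$, since for $n=2$ several entries of (\ref{Ks}) collapse to $0=(n-2)n$, so ``checked by inspection'' should really read ``(b) must be restricted to $n\geq 3$ or argued separately for $n=2$''.
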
 

\begin{lemma}
\label{inducedaction}
The permutation action of $\Gamma(\mathcal{A}^{c}_n)$ on the family of facets $\mathcal{F}$ induces a permutation action on the set of families $\{\mathcal{F}_{1},\ldots,\mathcal{F}_{n+3}\}$. More precisely, each family $\mathcal{F}_{i}$ is a block of imprimitivity (in the sense of permutation group theory) for the action of $\Gamma(\mathcal{A}^{c}_n)$ on $\mathcal{F}$. 
\end{lemma}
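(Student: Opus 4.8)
The plan is to show that whenever an automorphism $\alpha$ of $\mathcal{A}^{c}_n$ maps some facet $G_{i,c}\in\mathcal{F}_i$ into $\mathcal{F}_j$, it maps the entire family $\mathcal{F}_i$ into $\mathcal{F}_j$; this is exactly what it means for $\mathcal{F}_i$ to be a block of imprimitivity, and it is also what yields a well-defined induced action on $\{\mathcal{F}_1,\ldots,\mathcal{F}_{n+3}\}$. The first observation is that $\Gamma(\mathcal{A}^{c}_n)$ does permute $\mathcal{F}$: the facets $G_{i,c}$ are precisely the facets of $\mathcal{A}^{c}_n$ isomorphic to $\mathcal{A}^{c}_{n-1}$ (as recalled just before Lemma~\ref{useful}), so any automorphism must permute them. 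The key tool is that all the incidence relationships recorded in~(\ref{Ks}), and in particular the combinatorial characterizations in Lemma~\ref{useful}, are invariant under this action.

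The heart of the argument is to give an \emph{intrinsic} characterization, purely in terms of the permutation action on $\mathcal{F}$, of when two facets lie in a common family $\mathcal{F}_i$. Two facets $G_{i,c}$ and $G_{i,c'}$ with $c\neq c'$ lie in the same $\mathcal{F}_i$; by Lemma~\ref{useful}(b) this is detected by the count $K(i,c,i,c')=(n-2)n$, and that value occurs for \emph{no other} pair (compare the seven cases in~(\ref{Ks}): for $n\geq 4$ the value $(n-2)n = n^2-2n$ strictly exceeds all of $(n-1)^2-1=n^2-2n$... here one must be a little careful, since $(n-1)^2 = n^2-2n+1 > n^2-2n$, so actually $(n-1)^2$ and $(n-1)(n-2)$ need to be separated from $(n-2)n$ by hand, but this is exactly the content of Lemma~\ref{useful}(b), which we are entitled to assume). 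Thus ``$G$ and $G'$ lie in a common family $\mathcal{F}_i$'' is equivalent to the automorphism-invariant condition ``$G\neq G'$ and the number of facets in $\mathcal{F}$ meeting both $G$ and $G'$ equals $(n-2)n$''. Since this relation is preserved by every $\alpha\in\Gamma(\mathcal{A}^{c}_n)$, and since it is exactly the relation whose equivalence classes are the families $\mathcal{F}_i$, each $\alpha$ permutes these classes as a whole. One should check that this relation is genuinely an equivalence relation on $\mathcal{F}$ (reflexivity being vacuous/conventional, symmetry being clear, transitivity following because $G_{i,c}, G_{i,c'}$ related and $G_{i,c'}, G_{j,c''}$ related forces $j=i$ by the uniqueness in Lemma~\ref{useful}(b)); alternatively one invokes Lemma~\ref{useful}(c), which says $\mathcal{F}_i$ and $\mathcal{F}_j$ are ``totally non-adjacent'' precisely when $|i-j|=1$, as a second invariant feature pinning down the $\mathcal{F}_i$ as the natural blocks.

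From this the lemma follows formally: the families $\mathcal{F}_1,\ldots,\mathcal{F}_{n+3}$ partition $\mathcal{F}$, each $\alpha\in\Gamma(\mathcal{A}^{c}_n)$ carries the defining relation to itself and hence permutes the blocks, so we obtain a homomorphism $\Gamma(\mathcal{A}^{c}_n)\to S_{n+3}$ describing the induced action on $\{\mathcal{F}_1,\ldots,\mathcal{F}_{n+3}\}$, and each $\mathcal{F}_i$ is a block of imprimitivity by definition. The small cases $n=2,3$ where the inequalities among the entries of~(\ref{Ks}) are tight should be handled separately — for $n=2$ directly from the decagon picture (Figure~\ref{exchange2}), and for $n=3$ by the explicit description of $\mathcal{A}^{c}_3$ as the genus-$4$ map with $18+18$ faces given at the end of Section~\ref{defcolas}.

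The main obstacle I anticipate is precisely the bookkeeping that distinguishes the value $(n-2)n$ from the nearby values $(n-1)^2$ and $(n-1)(n-2)$ in~(\ref{Ks}) for all $n\geq 4$, i.e.\ verifying that $(n-2)n$ is attained by \emph{exactly} the pairs with $i=j$, $c\neq c'$ — but since this is the assertion of Lemma~\ref{useful}(b), which we may assume, the remaining work is the (routine) verification that the resulting relation is an equivalence relation and the (formal) deduction that automorphisms permute its classes.
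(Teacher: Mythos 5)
Your proposal is correct and takes essentially the same route as the paper: the paper's proof likewise observes that the count $K(i,c,j,c')$ is an automorphism invariant and that, by Lemma~\ref{useful}(b), the value $(n-2)n$ is attained exactly by the pairs lying in a common family $\mathcal{F}_i$, so if $\gamma(G_{i,c})=G_{j,b}$ and $\gamma(G_{i,c'})=G_{k,b'}$ then $j=k$. Your extra worry about small $n$ is harmless (for $n\geq 3$ the seven entries of~(\ref{Ks}) already separate $(n-2)n$ from the rest, and the paper folds the remaining verification into Lemma~\ref{useful}(b), which you are entitled to assume).
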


\begin{proof} Let $1\leq i\leq n+3$, let $G_{i,c}$, $G_{i,c'}$ be two distinct facets in $\mathcal{F}_{i}$, and let $\gamma\in\Gamma(\mathcal{A}^{c}_n)$. Suppose $\gamma(G_{i,c})=G_{j,b}$ and $\gamma(G_{i,c'})=G_{k,b'}$. Since $c\neq c'$, we know that the facets $G_{i,c}$ and $G_{i,c'}$ themselves have no vertices in common; however, there are precisely $(n-2)n$ facets in $\mathcal{F}$ that have both a vertex in common with $G_{i,c}$ and a vertex in common with $G_{i,c'}$, namely the facets $G_{l,a}$ with $l\neq i,i \pm 1$ and $a\neq c, c'$. Since $\gamma$ is an automorphism of $\mathcal{A}^{c}_n$, this property must continue to hold for the images $G_{j,b}$ and $G_{k,b'}$ of $G_{i,c}$ and $G_{i,c'}$ under $\gamma$; that is, $K(i,c,i,c')=K(j,b,k,b')$. Hence, by (\ref{Ks}) this shows that $j=k$. Thus $\mathcal{F}_{i}$ is a block of imprimitivity for the action of $\Gamma(\mathcal{A}^{c}_n)$ on~$\mathcal{F}$.
\end{proof}

\begin{lemma}\label{threeconscolor}
Let $n\geq 2$ and $\gamma\in\Gamma(\mathcal{A}^{c}_n)$. If $\gamma$ fixes each facet in $\mathcal{F}_i$ for some $i$ (considered mod $n+3$), and $\gamma(\mathcal{F}_{i+1})=\mathcal{F}_{i+1}$, then $\gamma$ is trivial.
\end{lemma}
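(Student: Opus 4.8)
I will prove this by reducing it to the ordinary case, Lemma~\ref{threecons}, via the covering $\mathcal{A}^{c}_n \to \mathcal{A}_n$ of Theorem~\ref{colorblind}, and by exploiting the block structure established in Lemma~\ref{inducedaction}. The hypothesis is that $\gamma$ fixes every facet $G_{i,c}$ in the family $\mathcal{F}_i$ and fixes the family $\mathcal{F}_{i+1}$ setwise. First I would observe that, by Lemma~\ref{inducedaction}, $\gamma$ permutes the families $\mathcal{F}_1,\ldots,\mathcal{F}_{n+3}$; the induced permutation $\bar\gamma$ of the indices $\{1,\ldots,n+3\}$ is an automorphism of the ``adjacency pattern'' among the $\mathcal{F}_j$, and by Lemma~\ref{useful}(c) two families $\mathcal{F}_j,\mathcal{F}_k$ are mutually vertex-disjoint exactly when $|j-k|=1$ (mod $n+3$). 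Hence $\bar\gamma$ preserves this cyclic adjacency, so $\bar\gamma$ lies in the dihedral group $D_{n+3}$ acting on $\{1,\ldots,n+3\}$. Since $\gamma$ fixes $\mathcal{F}_i$ (pointwise, hence setwise) and fixes $\mathcal{F}_{i+1}$ setwise, $\bar\gamma$ fixes the two adjacent indices $i$ and $i+1$; a dihedral permutation fixing two adjacent points of an $(n+3)$-cycle is the identity, so $\bar\gamma = \mathrm{id}$. Thus $\gamma$ fixes \emph{every} family $\mathcal{F}_j$ setwise.

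**Passing to the colorblind quotient.** Next I would use the covering projection $\pi\!: \mathcal{A}^{c}_n \to \mathcal{A}_n = \mathcal{A}^{c}_n/S_n$ from Theorem~\ref{colorblind}. Under $\pi$, each facet $G_{j,c}$ of $\mathcal{A}^{c}_n$ maps to the facet $F_j$ of $\mathcal{A}_n$ (the one determined by truncating vertex $j$), and each block $\mathcal{F}_j$ collapses to the single facet $F_j$. Because $\gamma$ fixes every family $\mathcal{F}_j$ setwise, and the $S_n$-action is exactly the ``recoloring'' action of (\ref{actionsn}) whose orbits are the fibres of $\pi$, the automorphism $\gamma$ descends to a well-defined automorphism $\bar\gamma$ of $\mathcal{A}_n$ with $\bar\gamma(F_j) = F_{\bar\gamma(j)} = F_j$ for all $j$. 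In particular $\bar\gamma$ fixes the two facets $F_i$ and $F_{i+1}$, which correspond to adjacent vertices $i,i+1$ of $N$. By Lemma~\ref{threecons}, $\bar\gamma$ is the identity automorphism of $\mathcal{A}_n$.

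**Lifting back to $\mathcal{A}^{c}_n$.** It remains to deduce that $\gamma$ itself is trivial on $\mathcal{A}^{c}_n$. Since $\bar\gamma = \mathrm{id}$ on $\mathcal{A}_n$, the automorphism $\gamma$ preserves the support of every face: for each face $(C,t)$ we have $supp$ of $\gamma((C,t))$ equal to $supp(t)$, and moreover $\gamma$ fixes the underlying set of uncolored diagonals of $C$. So $\gamma$ acts on each fibre of $\pi$, i.e. $\gamma$ lies in the deck group $S_n$; write $\gamma = \sigma$ for some $\sigma \in S_n$ acting as in (\ref{actionsn}). Now I use the hypothesis that $\gamma$ fixes \emph{each} facet $G_{i,c}$ pointwise-as-a-facet (setwise): by the remark preceding Lemma~\ref{useful}, $\sigma$ sends $G_{i,c}$ to $G_{i,\sigma(c)}$, so $G_{i,\sigma(c)} = G_{i,c}$ forces $\sigma(c)=c$ for every color $c \in C(N)$. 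Hence $\sigma$ is the identity permutation, and therefore $\gamma$ is the identity automorphism of $\mathcal{A}^{c}_n$.

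**Main obstacle.** The step I expect to require the most care is making the descent-and-lift argument fully rigorous: precisely, checking that $\gamma$ fixing all the blocks $\mathcal{F}_j$ setwise really does imply $\gamma$ normalizes the deck group $S_n$ and descends to $\mathcal{A}_n$, and conversely that a $\gamma$ covering $\mathrm{id}_{\mathcal{A}_n}$ must lie in $S_n$ (rather than merely in some larger ``color-respecting'' group). This is where one must invoke the explicit description of faces as pairs $(C,t)$ together with the identification $\Gamma(\mathcal{A}^{c}_n) = \Gamma_c(\mathcal{G}^{c}_n)$ and the structure of the $S_n$-action; an alternative, perhaps cleaner route is to avoid the quotient entirely and mimic the proof of Lemma~\ref{threecons} directly inside $\mathcal{A}^{c}_n$ by induction on $n$, using the facets $G_{i,c} \cong \mathcal{A}^{c}_{n-1}$ and the intersection numbers (\ref{Ks}) to propagate ``$\gamma$ fixes the relevant lower-rank faces'' around the $(n+3)$-gon and then down in rank.
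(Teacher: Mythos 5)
Your first step (the induced permutation of the families $\mathcal{F}_1,\ldots,\mathcal{F}_{n+3}$ preserves the ``vertex-disjoint iff $|j-k|=1$'' relation of Lemma~\ref{useful}(c), hence is dihedral, hence trivial since it fixes two adjacent indices) is sound and matches the first paragraph of the paper's proof. But the remainder of your argument has a genuine gap, which you yourself flag and do not close. To descend $\gamma$ to an automorphism of $\mathcal{A}_n=\mathcal{A}^{c}_n/S_n$ you must show that $\gamma$ maps every $S_n$-orbit of faces to an $S_n$-orbit; knowing that $\gamma$ stabilizes each block $\mathcal{F}_j$ only constrains its action on the $n(n+3)$ distinguished facets, not on arbitrary faces $(C,t)$, and nothing in the hypotheses gives you that $\gamma$ preserves the ``same support'' relation on vertices. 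Worse, even granting the descent and $\bar\gamma=\mathrm{id}$, the step ``$\gamma$ covers the identity, hence $\gamma\in S_n$'' asserts that the kernel of the descent homomorphism is exactly the recoloring group $S_n$; that is essentially the content of Theorem~\ref{colassocgroup}, which is proved \emph{from} this lemma, so the argument is circular (a priori an automorphism projecting to the identity could permute colorings fibrewise in a way depending on the triangulation, and ruling this out is the whole difficulty).

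The paper's proof takes the route you mention only as an alternative at the end: it stays inside $\mathcal{A}^{c}_n$ throughout. After showing $\gamma$ fixes every family $\mathcal{F}_j$, it uses the incidence pattern of Lemma~\ref{useful} a second time to upgrade this to ``$\gamma$ fixes every individual facet $G_{j,c}$'' (first for $j\neq i\pm1$, then for the remaining two indices by shifting the distinguished vertex), and then argues by induction on $n$: the facet $G_{1,c}\cong\mathcal{A}^{c}_{n-1}$ inherits families $\mathcal{F}^{1}_i$ whose members are the $(n-2)$-faces $G_{1,c}\cap G_{i,b}$, all fixed by $\gamma$, so the inductive hypothesis makes $\gamma$ trivial on $G_{1,c}$, hence on a flag, hence on $\mathcal{A}^{c}_n$. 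If you want to salvage your write-up, you should carry out this direct induction rather than the quotient argument; as it stands, the two central steps of your proposal are unjustified.
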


\begin{proof}
We know from Lemma~\ref{inducedaction} that $\gamma$ permutes $\mathcal{F}_1,\ldots,\mathcal{F}_{n+3}$. In particular, it follows from our assumptions that $\gamma$ fixes $\mathcal{F}_i$ and $\mathcal{F}_{i+1}$. First we show that $\gamma$ actually fixes each $\mathcal{F}_j$ for $j=1,\ldots,n+3$. To see this we begin with the two families $\mathcal{F}_i,\mathcal{F}_{i+1}$ invariant under $\gamma$, and work our way around $N$ to establish that $\gamma$ must leave each family $\mathcal{F}_j$ invariant. Here we exploit the fact that, given $j$ and $k$, no facet in $\mathcal{F}_j$ has a vertex in common with a facet in $\mathcal{F}_k$ if and only if $|k-j|=1$ (see Lemma~\ref{useful}). At the initial step (when $j=i+2$), we know that no facet in the family $\mathcal{F}_{i+2}$ has a vertex in common with a facet in $\mathcal{F}_{i+1}$. 
As $\gamma$ is an automorphism of $\mathcal{A}^{c}_n$ and $\mathcal{F}_{i+1}$ is invariant under $\gamma$, the corresponding property continues to hold for $\gamma(\mathcal{F}_{i+2})$ and $\gamma(\mathcal{F}_{i+1})=\mathcal{F}_{i+1}$. Thus we also must have $\gamma(\mathcal{F}_{i+2})=\mathcal{F}_{i+2}$. Continuing in this fashion around $N$ we then see that $\gamma$ fixes each $\mathcal{F}_{j}$.

We now use the first hypothesis on $\gamma$ to show that $\gamma$ fixes each facet in each family $\mathcal{F}_j$, that is, $\gamma(G_{j,c})=G_{j,c}$ for $j=1,\ldots,n+3$ and $c\in C(N)$. By assumption we already know this to hold for $j=i$, that is, $\gamma(G_{i,c})=G_{i,c}$ for each $c\in C(N)$. 
Now let $c\in C(N)$. Then the facets in $\mathcal{F}$ that have no vertex in common with $G_{i,c}$ comprise the facets $G_{j,c}$ with $j\neq i$, as well as the facets lying in $\mathcal{F}_{i-1}$, $\mathcal{F}_{i}\setminus\{G_{i,c}\}$ or $\mathcal{F}_{i+1}$. Now the corresponding statement remains true for the images under $\gamma$. Hence, since $\gamma$ fixes $G_{i,c}$ and leaves every family $\mathcal{F}_j$ with $j=1,\ldots,n+3$ invariant, this forces $\gamma(G_{j,c})=G_{j,c}$ for $j \neq i,i\pm 1$. As this also holds for $j=i$ (by assumption), $\gamma(G_{j,c})=G_{j,c}$ for each $j \neq i\pm 1$. Hence, since $c$ was arbitrary, $\gamma$ acts like the identity on each family $\mathcal{F}_j$ with $j \neq i\pm 1$. To extend this to the remaining families $\mathcal{F}_{i-1}$ and $\mathcal{F}_{i+1}$ we can simply replace the distinguished vertex $i$ of $N$ in the above by the vertex $i-3$ or $i+3$ (mod $n+3$), respectively. Thus $\gamma$ fixes each facet in each family $\mathcal{F}_j$.

We now complete the proof by induction on $n$. The case $n=2$ is clear. In this case $N$ is a pentagon and has only short diagonals; hence every facet of the decagon $\mathcal{A}_{2}^c$ lies in $\mathcal{F}$ and therefore is fixed by $\gamma$. Thus $\gamma$ is trivial. 

Now suppose $n\geq 3$. Choose a facet $G_{1,c}$ of $\mathcal{A}_{n}^c$ in $\mathcal{F}_{1}$ and keep it fixed. Recall that the vertices of $G_{1,c}$ are the triangulations of the $(n+3)$-gon $N$ that have $\{n+3,2\}$ as a diagonal with color $c$; these correspond to the triangulations of the $(n+2)$-gon $N_1$ with diagonals colored from $C(N_1):=C(N)\setminus\{c\}$. As a polytope, $G_{1,c}$ is isomorphic to the $(n-1)$-associahedron $\mathcal{A}_{n-1}^c$ derived from the colored triangulations of $N_1$ with colors from $C(N_1)$. Just as for $N$ and $\mathcal{A}_{n}^c$, the vertices $2,\ldots,n+3$ of $N_1$ naturally give rise to families of facets of this $(n-1)$-associahedron $\mathcal{A}_{n-1}^c$. For $i=2,\ldots,n+3$, let $G^{1}_{i,b}$ denote the facet of this $\mathcal{A}_{n-1}^c$ whose vertex-set consists of the triangulations of $N_1$ that have $\{i-1,i+1\}$ (now considered mod $n+2$) as a diagonal with color $b$; then $G^{1}_{i,b}$ corresponds to the set of triangulations of $(N_1)_i$ with diagonals colored from $C(N_1)\setminus\{b\}$. Also define 
\[ \mathcal{F}^{1}_{i}:=\{G^{1}_{i,b}\mid b\in C(N_1)\}\quad\, (i=2,\ldots,n+3). \]

Now consider $\gamma$. We know that $\gamma$ fixes each facet in $\mathcal{F}$ and, in particular, the facet $G_{1,c}$ of $\mathcal{A}_{n}^c$. Hence $\gamma$ acts (faithfully) as an automorphism on the $(n-1)$-associahedron $\mathcal{A}_{n-1}^c$ determined by $G_{1,c}$. Now consider a vertex $i$ of $N_1$ from among $3,\ldots,n+2$ (working again mod $n+2$). Every facet $G^{1}_{i,b}$ (with $b\neq c)$ of $\mathcal{A}_{n-1}^c$ in $\mathcal{F}^{1}_{i}$ is the common $(n-2)$-face of the pair of adjacent facets $G_{1,c}$ and $G_{i,b}$ of $\mathcal{A}_{n}^c$. Since $\gamma$ fixes both $G_{1,c}$ and $G_{i,b}$, it also fixes $G^{1}_{i,b}$. Hence $\gamma$ fixes every facet of $\mathcal{A}_{n-1}^c$ in $\mathcal{F}^{1}_{i}$ for 
$i=3,\ldots,n+2$. In particular, the automorphism $\gamma$ of $\mathcal{A}_{n-1}^c$ satisfies the assumptions of the lemma with $i=3$ and corresponding families $\mathcal{F}^{1}_{3},\mathcal{F}^{1}_{4}$ of facets of $\mathcal{A}_{n-1}^c$. Hence the inductive hypothesis applies and shows that $\gamma$ acts trivially on the facet $\mathcal{A}_{n-1}^c$ given by $G_{1,c}$. Thus $\gamma$ fixes an entire flag of the $n$-polytope $\mathcal{A}_{n}^c$, so $\gamma$ must be trivial.
\end{proof}

Now we can prove the following theorem.

\begin{theorem}\label{colassocgroup} 
$\Gamma(\mathcal{A}^{c}_n) \cong S_{n}\times D_{n+3}$
\end{theorem}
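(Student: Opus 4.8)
The plan is to show that the subgroup $S_n\times D_{n+3}$ of $\Gamma(\mathcal{A}^{c}_n)$ already constructed (via \eqref{actionsn} and \eqref{groupan}) exhausts the whole automorphism group. Since we have already verified that $S_n$ and $D_{n+3}$ commute and intersect trivially, the subgroup they generate is indeed isomorphic to $S_n\times D_{n+3}$, and it remains only to prove surjectivity, i.e.\ that every $\gamma\in\Gamma(\mathcal{A}^{c}_n)$ lies in this subgroup. The strategy mirrors the proof of Lemma~\ref{combaut}: use the distinguished family $\mathcal{F}$ of facets isomorphic to $\mathcal{A}^{c}_{n-1}$ together with its block decomposition $\{\mathcal{F}_1,\dots,\mathcal{F}_{n+3}\}$ from Lemma~\ref{inducedaction}, and reduce an arbitrary $\gamma$ modulo $S_n\times D_{n+3}$ to an automorphism satisfying the hypotheses of Lemma~\ref{threeconscolor}, whence it is trivial.

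Concretely, first I would note that by Lemma~\ref{inducedaction}, $\gamma$ induces a permutation of the $n+3$ blocks $\mathcal{F}_1,\dots,\mathcal{F}_{n+3}$. The adjacency pattern among blocks encoded in Lemma~\ref{useful}(c) — no facet in $\mathcal{F}_j$ meets a facet in $\mathcal{F}_k$ exactly when $|j-k|=1$ — says precisely that this induced permutation preserves the cycle structure $1,2,\dots,n+3$ on the indices, hence lies in $D_{n+3}$ acting in the standard way. Composing $\gamma$ with a suitable element of the subgroup $D_{n+3}\le\Gamma(\mathcal{A}^{c}_n)$, we may assume $\gamma$ fixes every block $\mathcal{F}_i$ setwise. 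Now fix one block, say $\mathcal{F}_1=\{G_{1,c}\mid c\in C(N)\}$; then $\gamma$ permutes the $n$ facets $G_{1,c}$, and this permutation of colors is realized by some $\sigma\in S_n$. Composing $\gamma$ with $\sigma^{-1}\in S_n\le\Gamma(\mathcal{A}^{c}_n)$ — which by the displayed remark after \eqref{groupan} keeps every block invariant and permutes second subscripts — we may further assume that $\gamma$ fixes each facet in $\mathcal{F}_1$ individually while still fixing every block $\mathcal{F}_i$; in particular $\gamma(\mathcal{F}_2)=\mathcal{F}_2$. At this point $\gamma$ satisfies exactly the hypotheses of Lemma~\ref{threeconscolor} (with $i=1$), so $\gamma$ is trivial. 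Therefore the original $\gamma$ was a product of an element of $S_n$ and an element of $D_{n+3}$, proving $\Gamma(\mathcal{A}^{c}_n)=S_n\times D_{n+3}$.

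The one point requiring care is the claim that the induced permutation of blocks lies in $D_{n+3}$ rather than in the larger group $S_{n+3}$ of all permutations. This follows because the "non-meeting" relation between blocks is a graph on $\{1,\dots,n+3\}$ whose non-edges are exactly the pairs $\{j,j+1\}$ (indices mod $n+3$), i.e.\ the complement of a cycle $C_{n+3}$; an automorphism of this graph must be an automorphism of $C_{n+3}$, which is $D_{n+3}$. (For the very small cases $n=2$, where $C_5$ is self-complementary, and $n=1,0$, one checks directly, as is done in the examples following Lemma~\ref{asociacolor}.) Once the block action is pinned down to $D_{n+3}$, the rest is the routine "reduce modulo the obvious subgroup, then apply a rigidity lemma" argument, and the genuine mathematical content has already been deposited in Lemmas~\ref{useful}, \ref{inducedaction} and \ref{threeconscolor}. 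I therefore expect the main obstacle — already handled by those lemmas — to be the combinatorial bookkeeping in $K(i,c,j,c')$ that distinguishes the block structure; the group-theoretic wrap-up presented here is short.

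\begin{proof}
We have already seen that $S_n$ and $D_{n+3}$ are subgroups of $\Gamma(\mathcal{A}^{c}_n)$ that commute elementwise and intersect trivially, so they generate a subgroup isomorphic to $S_n\times D_{n+3}$. It remains to prove that every $\gamma\in\Gamma(\mathcal{A}^{c}_n)$ lies in this subgroup.

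Let $\gamma\in\Gamma(\mathcal{A}^{c}_n)$. Since the facets $G_{i,c}$ are precisely the facets of $\mathcal{A}^{c}_n$ isomorphic to $\mathcal{A}^{c}_{n-1}$, the group $\Gamma(\mathcal{A}^{c}_n)$ permutes the family $\mathcal{F}$, and by Lemma~\ref{inducedaction} it permutes the blocks $\mathcal{F}_1,\dots,\mathcal{F}_{n+3}$. By Lemma~\ref{useful}(c), no facet in $\mathcal{F}_j$ shares a vertex with a facet in $\mathcal{F}_k$ precisely when $|j-k|=1$ (mod $n+3$); since $\gamma$ preserves this relation, the induced permutation of $\{1,\dots,n+3\}$ is an automorphism of the cycle on $1,\dots,n+3$, hence belongs to $D_{n+3}$. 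Composing $\gamma$ with a suitable element of the subgroup $D_{n+3}\leq\Gamma(\mathcal{A}^{c}_n)$, we may assume $\gamma$ fixes every block $\mathcal{F}_i$ setwise.

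Now $\gamma$ permutes the $n$ facets in $\mathcal{F}_1=\{G_{1,c}\mid c\in C(N)\}$, inducing a permutation of the color set $C(N)$, say $\sigma\in S_n$. Composing $\gamma$ with $\sigma^{-1}\in S_n\leq\Gamma(\mathcal{A}^{c}_n)$ — which keeps every block $\mathcal{F}_i$ invariant and sends $G_{i,c}$ to $G_{i,\sigma^{-1}(c)}$ — we may further assume that $\gamma$ fixes $G_{1,c}$ for every $c\in C(N)$, while still fixing every block $\mathcal{F}_i$; in particular $\gamma(\mathcal{F}_{2})=\mathcal{F}_{2}$. Thus $\gamma$ satisfies the hypotheses of Lemma~\ref{threeconscolor} with $i=1$, and therefore $\gamma$ is trivial.

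Unwinding the two reductions, the original $\gamma$ is the product of an element of $D_{n+3}$ and an element of $S_n$. Hence $\Gamma(\mathcal{A}^{c}_n)=S_n\times D_{n+3}$.
\end{proof}
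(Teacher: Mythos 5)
Your proposal is correct and follows essentially the same route as the paper: reduce $\gamma$ modulo the subgroups $D_{n+3}$ and $S_n$ using the block structure of $\mathcal{F}$ from Lemmas~\ref{useful} and \ref{inducedaction}, then invoke the rigidity result of Lemma~\ref{threeconscolor}. The only (harmless) variation is that you pin down the block permutation all at once as an automorphism of the cycle $C_{n+3}$, whereas the paper only normalizes the three consecutive blocks $\mathcal{F}_{n+3},\mathcal{F}_1,\mathcal{F}_2$ before applying the lemma; both are valid.
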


\begin{proof}
We need to show that the subgroup $S_{n}\times D_{n+3}$ of $\Gamma(\mathcal{A}^{c}_n)$ coincides with $\Gamma(\mathcal{A}^{c}_n)$.

Let $\gamma\in\Gamma(\mathcal{A}^{c}_n)$. Then $\gamma$ permutes $\mathcal{F}_1,\ldots,\mathcal{F}_{n+3}$, by Lemma~\ref{inducedaction}. Suppose that $\gamma(\mathcal{F}_1)=\mathcal{F}_i$ for some $i$. By applying an automorphism of $\mathcal{A}^{c}_n$ from $D_{n+3}$ that takes (the facets in) $\mathcal{F}_i$ back to (the facets in) $\mathcal{F}_1$, we may assume that $i=1$; that is, $\mathcal{F}_1$ is fixed by $\gamma$. 

We claim that then $\gamma$ either fixes or interchanges the two families $\mathcal{F}_2$ and $\mathcal{F}_{n+3}$. This is not obvious and can be seen as follows. Each of these two families has the property that none of its facets has a vertex in common with a facet in $\mathcal{F}_1$. Their images under $\gamma$ must have the same property, since $\gamma$ leaves $\mathcal{F}_1$ invariant. Now by Lemma~\ref{useful}, if $j$ and $k$ are such that no facet in $\mathcal{F}_j$ has a vertex in common with a facet in $\mathcal{F}_k$, then $|k-j|=1$. With $j=1$ and $k=2$ or $n+3$ this then settles our claim. 

By applying (if need be) an automorphism of $\mathcal{A}^{c}_n$ from $D_{n+3}$ that fixes $\mathcal{F}_1$ but interchanges $\mathcal{F}_2$ and $\mathcal{F}_{n+3}$, we may further assume that $\gamma$ leaves each of the families $\mathcal{F}_{n+3},\mathcal{F}_1,\mathcal{F}_{2}$ invariant. Note that at this point we have exhausted all possibilities for further applications of automorphisms from $D_{n+3}$.

Next we show that, modulo the subgroup $S_n$ of $\Gamma(\mathcal{A}^c_n)$, we can further achieve that $\gamma$ fixes every facet in $\mathcal{F}_1$. In fact, since $\gamma$ leaves $\mathcal{F}_1$ invariant, it permutes the facets $G_{1,c}$ in $\mathcal{F}_1$ (or rather, their colors) according to an element $\sigma$ in $S_n$. Now applying $\sigma^{-1}$ then reduces $\gamma$ to an automorphism that fixes every facet in $\mathcal{F}_1$. Hence we may assume that $\gamma$ itself fixes every facet of $\mathcal{A}^c_n$ in $\mathcal{F}_1$.

Thus $\gamma$ is an automorphism of $\mathcal{A}^c_n$ that satisfies the assumptions of Lemma~\ref{threeconscolor}. It follows that $\gamma$ is trivial.
\end{proof}

\section{The Colorful Cyclohedron}
\label{cyc}

In this final section we discuss a colorful polytope version of the cyclohedron. The cyclohedron is a convex $(n+1)$-polytope whose vertices correspond to the centrally symmetric triangulations of a given centrally symmetric convex $(2n+4)$-gon $K$ (see \cite{fom}).

Let $K$ be a centrally symmetric convex $(2n+4)$-gon with center $o$. A diagonal of $K$ is {\em central\/} if it passes through $o$. We only consider triangulations $t$ of $K$ which are centrally symmetric with respect to $o$. Every such triangulation $t$ has exactly one diagonal, the {\em central\/} diagonal of $t$, that passes through $o$. All other diagonals of $t$ occur in centrally symmetric pairs. There are $n$ such pairs and overall $2n+1$ diagonals in $t$.

In the colorful version we consider {\em colored triangulations\/} $t$ of $K$, meaning that all non-central diagonals receive a color from an $n$-set of colors $C(K)$ such that the two diagonals in a centrally symmetric pair are colored the same, and any two diagonals from distinct pairs are colored differently. Note that the central diagonal will {\em not\/} receive a color. (In principle we could also color the central diagonals of triangulations but then all central diagonals would have to receive the same color.)  Flips of diagonals of $t$ are defined similarly as before, except that now non-central diagonals must be flipped in pairs (while preserving the colors); that is, if a non-central diagonal is flipped, then so is the diagonal that forms a centrally symmetric pair with it. Thus, either the (uncolored) central diagonal is flipped (and becomes again a central diagonal), or the two diagonals in a centrally symmetric pair are flipped simultaneously. 

As for the colorful associahedron, the flipping operations determine the {\em (colorful) exchange graph\/}, denoted $\mathcal{H}^c_{n}$, whose vertices are the centrally symmetric triangulations of $K$. Now two vertices $t$ and $t'$ of $\mathcal{H}^{c}_n$ are {\em adjacent\/} if and only if, viewed as triangulations, $t'$ is obtained from $t$ by flipping either the central diagonal or both diagonals in a centrally symmetric pair. We show that $\mathcal{H}^{c}_n$ is the $1$-skeleton of a simple abstract $(n+1)$-polytope $\mathcal{Z}_{n}^c$ called the {\em colorful $(n+1)$-cyclohedron\/}. 

First note that the central diagonal in a centrally symmetric triangulation $t$ of $K$ naturally ``splits $t$ in half'', where each half is an ordinary triangulation of a convex $(n+3)$-gon $K^t$. (As the two halves are related by the central symmetry in $o$, we need to consider only one of them.) Thus every vertex $t$ of $\mathcal{H}^{c}_n$ determines (via its central diagonal) a vertex of the exchange graph $\mathcal{G}^{c}_n$ for the colorful $n$-associahedron. As there are $n+2$ choices for the central diagonal of $K$, the numbers of vertices of $\mathcal{H}^{c}_n$ and $\mathcal{G}^{c}_n$ are related by 
\begin{equation}
\label{vertcyc}
|V(\mathcal{H}^c_{n})|
=(n+2)\,|V(\mathcal{G}^{c}_{n})| 
=n!\!\cdot\! (n+2)C_{n+1}=n!\!\cdot\!\binom{2n+2}{n+1}.
\end{equation}
Note that the degree of a vertex $t$ in $\mathcal{H}^c_n$ is $n+1$, that is, one larger than the degree of the corresponding vertex in $\mathcal{G}^c_n$; the additional edge arises from the flip of the central diagonal in $t$. Thus $\mathcal{H}^{c}_n$ is an $(n+1)$-regular graph. In particular, the number of edges of $\mathcal{H}^{c}_n$ is given by 
\[ \frac{n+1}{2}\,|V(\mathcal{H}^c_{n})|  
=\frac{1}{2}(n+1)!\!\cdot\!\binom{2n+2}{n+1}
= \frac{(2n+2)!}{2(n+1)!} \,.\]

Next we establish the following lemma.

\begin{lemma}
\label{connectivitycic} 
$\mathcal{H}^{c}_{n}$ is connected for each $n$.
\end{lemma}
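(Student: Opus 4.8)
The proof should parallel the connectivity argument for $\mathcal{G}^c_n$ (Lemma~\ref{gconn}), reducing the problem to the associahedral case via the ``split in half'' picture. The strategy is a two-stage reduction: first show that any vertex of $\mathcal{H}^c_n$ can be connected to one having a prescribed central diagonal, and then show that two vertices sharing a central diagonal can be connected by using the already-established connectivity of $\mathcal{G}^c_n$ on the two halves.

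\textbf{Stage one: matching central diagonals.} Given two vertices $t$ and $s$ of $\mathcal{H}^c_n$, let $d$ and $e$ be their central diagonals. The $n+2$ possible central diagonals of $K$ can be cyclically rotated into one another, and I claim any two ``adjacent'' central diagonals (those differing by one step at each end) are connected by a single flip: indeed, when we flip the central diagonal $d$ of a centrally symmetric triangulation, the quadrilateral formed by the two triangles of $t$ meeting along $d$ is itself centrally symmetric, so its other diagonal is again a central diagonal, and it is one of the two ``neighbors'' of $d$. Since the central diagonal can be moved one step at a time by such flips, and each flip is an edge of $\mathcal{H}^c_n$, we can connect $t$ to some vertex $t'$ whose central diagonal is $e$, the central diagonal of $s$. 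Thus we may assume from the outset that $t$ and $s$ have the same central diagonal.

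\textbf{Stage two: fixed central diagonal.} Now suppose $t$ and $s$ share the central diagonal $d$, which splits $K$ into two centrally symmetric halves; pick one half, a convex $(n+3)$-gon $K^t = K^s$. Then $t$ and $s$ restrict to colored triangulations $t_0$ and $s_0$ of this $(n+3)$-gon, i.e.\ to vertices of $\mathcal{G}^c_n$ (up to identifying the color set of $K$ with $C(N)$). By Lemma~\ref{gconn}, $\mathcal{G}^c_n$ is connected, so there is an edge path in $\mathcal{G}^c_n$ from $t_0$ to $s_0$. Each flip in this path flips a non-central diagonal of one half; performing the \emph{same} flip on that half and, simultaneously, the centrally symmetric flip on the other half (preserving colors, as required by the definition of flips in $\mathcal{H}^c_n$) produces a sequence of centrally symmetric colored triangulations of $K$, all with central diagonal $d$, consecutive ones differing by exactly one allowed flip of $\mathcal{H}^c_n$. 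This yields an edge path in $\mathcal{H}^c_n$ from $t$ to $s$, completing the proof.

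\textbf{Main obstacle.} The only delicate point is the claim in Stage one that flipping a central diagonal again produces a central diagonal, and that the resulting central diagonals exhaust all $n+2$ possibilities as one repeats the operation — this requires checking that the quadrilateral around a central diagonal in a centrally symmetric triangulation is centrally symmetric (immediate from the symmetry of $t$) and a short connectivity argument on the ``central diagonal graph'' of $K$ (a path/cycle on $n+2$ vertices), which is routine. Everything in Stage two is a mechanical transfer of paths through the central symmetry and needs no new idea beyond Lemma~\ref{gconn}.
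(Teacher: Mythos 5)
Your Stage two is essentially the paper's own argument: restrict to one half of $K$ cut off by the common central diagonal, apply Lemma~\ref{gconn} to the resulting $(n+3)$-gon, and lift the flip sequence back to centrally symmetric flips of $K$. The genuine gap is in Stage one. You claim that flipping the central diagonal of a centrally symmetric triangulation moves it to one of its two neighbouring central diagonals, so that repeated central flips walk it around all $n+2$ positions. Neither part holds. The quadrilateral around the central diagonal $\{i,i+n+2\}$ is indeed centrally symmetric, but its other two vertices form a pair $\{a,a+n+2\}$ where $a$ is the apex of the triangle of $t$ sitting on the central diagonal; this $a$ is dictated by the triangulation, not chosen by you, and it need not be $i\pm1$. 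Concretely, in the octagon ($n=2$) take the centrally symmetric triangulation with central diagonal $\{1,5\}$ and non-central diagonals $\{1,3\},\{3,5\},\{5,7\},\{7,1\}$: the quadrilateral around $\{1,5\}$ is $1,3,5,7$, so the flip produces $\{3,7\}$, which is not adjacent to $\{1,5\}$. Worse, flipping the central diagonal twice in succession returns you to the starting triangulation, so central flips alone from a fixed vertex only ever visit two central diagonals; your ``one step at a time'' walk on the cycle of central diagonals cannot be executed without interleaving non-central flips, which your Stage one does not allow for.

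The gap is repairable in two ways. You could first invoke your own Stage two machinery (Lemma~\ref{gconn} on the half-polygon) to move, without changing the central diagonal, to a triangulation whose triangle on the central diagonal has apex at the prescribed vertex $j$, after which a single central flip lands on $\{j,j+n+2\}$; iterating is then unnecessary since $j$ is arbitrary. Or you could do what the paper does: take for granted that the exchange graph of the ordinary (uncolored) cyclohedron is connected, lift a path in that graph to the colored setting exactly as in the proof of Lemma~\ref{gconn}, and thereby reduce to the case where $t$ and $s$ have the same support, hence in particular the same central diagonal. Either way, Stage one as written would fail and needs an argument you have not supplied.
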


\begin{proof}
As in the proof of the connectedness of $\mathcal{G}^{c}_{n}$ (see Lemma~\ref{gconn}) we are taking for granted that the exchange graph for the ordinary cyclohedron is connected. Moreover, by arguments very similar to those for $\mathcal{G}^{c}_n$,
it suffices to show that any two vertices $s$ and $t$ of $\mathcal{H}^{c}_{n}$ with the same central diagonal can be connected by an edge path in $\mathcal{H}^{c}_{n}$. 

Suppose $s$ and $t$ are given. Let $s^*$ and $t^*$, respectively, denote the corresponding vertices of the colorful $n$-associahedron for the $(n+3)$-polygon $K^s$ determined by the common central diagonal of $s$ and $t$.  As the graph $\mathcal{G}^{c}_n$ derived from $K^s$ is connected, we can join the two vertices $s^*$ and $t^*$ by an edge path in $\mathcal{G}^{c}_n$. In other words, the triangulations $s^*$ and $t^*$ of $K^s$ can be moved into each other by a sequence 
of diagonal flips. When this sequence of flips on triangulations of $K^s$ is lifted to a sequence of corresponding flips on centrally symmetric triangulations of $K$, we obtain an edge path in $\mathcal{H}^{c}_{n}$ joining the original vertices $s$ and $t$. Thus $\mathcal{H}^{c}_{n}$ is connected. 
\end{proof}
    
At this point we know that $\mathcal{H}^{c}_{n}$ is a connected $(n+1)$-regular graph. In fact, $\mathcal{H}^{c}_{n}$ has the structure of a properly edge colored $(n+1)$-regular graph, in which $n$ of the $n+1$ edges emanating from a given vertex are labeled with the colors in $C(K)$ while the remaining edge (corresponding to the flip of the central diagonal) is uncolored, or ``colored'' with the {\em uncolor},~$c^*$.  Hence, appealing to Theorem~\ref{colpo} we obtain the following theorem.

\begin{theorem}
\label{cyclopolytope}
The exchange graph $\mathcal{H}_{n}^c$ given by a centrally symmetric convex $(2n+4)$-gon is the $1$-skeleton of a simple abstract polytope of rank~$n+1$, the {\em colorful $(n+1)$-cyclohedron\/}~$\mathcal{Z}^{c}_n$.
\end{theorem}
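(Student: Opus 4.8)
The plan is to obtain the theorem directly from Theorem~\ref{colpo} once we have verified that $\mathcal{H}^c_n$ is a finite connected properly edge colored $(n+1)$-regular graph. Three ingredients are needed: finiteness and connectedness, $(n+1)$-regularity, and a proper $(n+1)$-edge-coloring. The first two are already in hand: $\mathcal{H}^c_n$ is finite by the vertex count~(\ref{vertcyc}), connectedness is Lemma~\ref{connectivitycic}, and the discussion preceding the theorem shows that every vertex $t$ of $\mathcal{H}^c_n$ has exactly $n+1$ incident edges, one for each of the $n$ centrally symmetric pairs of non-central diagonals of $t$, together with one for the central diagonal of $t$. Thus $\mathcal{H}^c_n$ is $(n+1)$-regular and, in particular, $\chi_1(\mathcal{H}^c_n)\geq n+1$.

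It remains to exhibit a proper $(n+1)$-edge-coloring, using the color set $R:=C(K)\cup\{c^*\}$, where $c^*$ is the ``uncolor''. Given an edge $\{t,t'\}$ of $\mathcal{H}^c_n$, we assign to it the color $c\in C(K)$ of the centrally symmetric pair of diagonals that is flipped in passing from $t$ to $t'$, if that flip involves non-central diagonals; otherwise, when the flip is of the central diagonal, we assign to it the uncolor $c^*$. First one checks that this assignment is well defined, i.e.\ independent of which endpoint of the edge is used to read off the label. This is clear for non-central flips because flipping preserves colors, so the reverse flip from $t'$ back to $t$ uses a pair of diagonals of the same color $c$; and for the central flip one notes that the quadrilateral formed by the two triangles of $t$ meeting along its central diagonal is itself centrally symmetric (these two triangles are interchanged by the symmetry in $o$), so the opposite diagonal of that quadrilateral---the diagonal produced by the flip---again passes through $o$, and the reverse flip is likewise a central flip. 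Hence the edge color depends only on the edge.

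Next one checks that the coloring is proper, i.e.\ any two edges of $\mathcal{H}^c_n$ sharing a vertex $t$ receive distinct colors. The $n+1$ edges at $t$ correspond bijectively to the $n+1$ flips available at $t$: the $n$ flips of non-central pairs receive the $n$ pairwise distinct colors of those pairs (distinct because, by definition of a colored triangulation of $K$, diagonals from distinct pairs are colored differently), and the one central flip receives $c^*\notin C(K)$. So all $n+1$ incident edges carry distinct colors; the coloring is a proper $(n+1)$-edge-coloring, whence $\chi_1(\mathcal{H}^c_n)=n+1$ and $\mathcal{H}^c_n$ is a connected properly edge colored $(n+1)$-regular graph. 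Applying Theorem~\ref{colpo} (with rank parameter $n+1$) now shows that $\mathcal{H}^c_n$ is the $1$-skeleton of a simple abstract polytope of rank $n+1$, which we name the colorful $(n+1)$-cyclohedron $\mathcal{Z}^c_n$, completing the proof.

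I expect no serious obstacle: the only point requiring genuine (though easy) geometric care is the well-definedness of the uncolor on central-diagonal flips, namely that flipping a central diagonal again yields a central diagonal, while everything else is a direct transcription of the colorful associahedron argument (Lemma~\ref{asociacolor} and Theorem~\ref{colorpolytope}) with the one extra, uncolored, edge per vertex adjoined.
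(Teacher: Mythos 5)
Your proposal is correct and follows essentially the same route as the paper: the authors likewise combine Lemma~\ref{connectivitycic}, the $(n+1)$-regularity noted after~(\ref{vertcyc}), and the proper $(n+1)$-edge-coloring by $C(K)\cup\{c^*\}$ (with $c^*$ on central-diagonal flips), and then invoke Theorem~\ref{colpo}. The only addition you make is an explicit check that a central flip yields a central diagonal, which the paper builds into its definition of flips for centrally symmetric triangulations.
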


For example, when $n=1$ the polygon $K$ is a centrally symmetric hexagon and the colorful 2-cyclohedron $\mathcal{Z}^{c}_{1}$ is again a hexagon. When $n=2$ (and $K$ is a centrally symmetric octagon), the colorful $3$-cyclohedron $\mathcal{Z}^{c}_{2}$ (of rank $3$) is a trivalent map on the $2$-torus with 
$40$~vertices, $60$ edges and $20$ facets.

For $j=0,1,\ldots,n+1$, a typical $j$-face $F$ of the colorful $(n+1)$-cyclohedron $\mathcal{Z}^{c}_n$ can be represented as a pair $(C,t)$, where $t$ is a vertex of $\mathcal{Z}^{c}_{n}$ (a triangulation of $K$) and $C$ is a $j$-subset of the new $(n+1)$-element color set $C^{*}:=C(K)\cup\{c^*\}$, with the understanding that the vertex-set of $F$ in $\mathcal{Z}^{c}_n$ consists of all vertices representing centrally symmetric triangulations of $K$ obtained from $t$ by a sequence of flips involving only diagonals with colors from $C$. 

If $(C,t)$ is a facet (face of rank $n$) of $\mathcal{Z}^{c}_n$, the corresponding $n$-subset $C$ contains all colors in $C^*$, except one, called the {\it missing color\/} of the facet. A diagonal of $t$ carrying the missing color of a facet $(C,t)$ is called a {\em rigid\/} diagonal of this facet. Flips involved in generating the facet can only be performed on diagonals which are not rigid. A facet has one or two rigid diagonals, depending on whether or not the missing color is $c^*$. If there is just one rigid diagonal, then the diagonal of $K$ underlying this rigid diagonal is a central diagonal of $K$. On the other hand, if there are two rigid diagonals, then the two diagonals of $K$ underlying these rigid diagonals form a centrally symmetric pair of non-central diagonals of $K$. In short, a facet has either a central rigid diagonal or one centrally symmetric pair of non-central rigid diagonals. 

Among the facets with central rigid diagonals, a facet is uniquely determined by its rigid diagonal. However, among the facets with a centrally symmetric pair of non-central rigid diagonals, a facet is not generally determined by its pair of rigid diagonals (when $n>2$). In fact, the diagonal pair dissects $K$ into three regions, and flips involving non-rigid diagonals must preserve this dissection, preventing colors to move between regions. Thus a facet with a centrally symmetric pair of non-central rigid diagonals also depends on how the color set is partitioned among the regions (of course, the regions not containing the center of $K$ must receive the same colors). Note that, since $K$ has $2n+4$ edges and $n+2$ central diagonals (colorable only with $c^*$), $K$ has
\[ \binom{2n+4}{2}-(2n+4) -(n+2)=2n(n+2) \]
non-central diagonals and therefore $n(n+2)$ centrally symmetric pairs of non-central diagonals, each colorable with $n$ possible colors from $C(K)$.

The $n+2$ facets of $\mathcal{Z}^{c}_n$ with a central rigid diagonal are isomorphic to the colorful $n$-associahedron $\mathcal{A}^{c}_n$. In fact, a central diagonal divides $K$ into two halves, each an $(n+3)$-gon, and the centrally symmetric triangulations of $K$ with this central diagonal as diagonal are in one-to-one correspondence with the ordinary triangulations in one of these $(n+3)$-gons. Note that these $n+2$ facets partition the entire vertex-set of $\mathcal{Z}^{c}_n$. We let $F^d$ denote the facet of $\mathcal{Z}^{c}_n$ with central rigid diagonal $d$.

On the other hand, when $n>1$ the facets of $\mathcal{Z}^{c}_n$ with a centrally symmetric pair of non-central rigid diagonals are not isomorphic to $\mathcal{A}^{c}_n$, as can be seen as follows. (When $n=1$ the facets have rank $1$ and are trivially isomorphic to $\mathcal{A}^{c}_n$.) Suppose the diagonal pair dissects $K$ into three regions such that the central region $K'$ is a $(2k+4)$-gon. There are two cases to consider. First, if $k>0$ then the corresponding facet of $\mathcal{Z}^{c}_n$ must contain a $2$-face isomorphic to $\mathcal{Z}^{c}_1$, that is, a hexagon; in fact, take a centrally symmetric hexagon $K''$ with center $o$ and vertices among those of $K'$, triangulate the complements of $K''$ in $K'$ and of $K'$ in $K$ in a centrally symmetric fashion, and then allow only the diagonals of $K''$ to flip, thus creating a $2$-face of $\mathcal{Z}^{c}_n$ based on $K''$ and hence isomorphic to $\mathcal{Z}^{c}_1$. However, as we saw earlier, $\mathcal{A}^{c}_n$ has only squares and decagons as $2$-faces, so the facet of $\mathcal{Z}^{c}_n$ determined by the given diagonal pair cannot be isomorphic to $\mathcal{A}^{c}_n$. This then leaves the case $k=0$. In this case the central region $K'$ is a square, so a simple vertex count shows that the corresponding facet of $\mathcal{Z}^{c}_n$ cannot be isomorphic to $\mathcal{A}^{c}_n$; in fact, the number of vertices in the facet is just twice that of the colorful $(n-1)$-associahedron $\mathcal{A}^{c}_{n-1}$, and hence equals the number of vertices of the colorful $n$-associahedron $\mathcal{A}^{c}_{n}$ only when $n=1$. This settles our claim.
\medskip

As in the case of the colorful associahedron, the symmetric group $S_n$ on the $n$ colors (not including $c^*$) acts as a group of automorphisms on $\mathcal{Z}^{c}_n$.  In fact, if $\mathcal{Z}_n$ denotes the ordinary cyclohedron, then very similar considerations as before establish 

\begin{theorem}
\label{cyccolorblind}
$\mathcal{Z}^{c}_n/S_n \cong \mathcal{Z}_n$.
\end{theorem}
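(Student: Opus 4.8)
The plan is to mirror, nearly verbatim, the argument already given for Theorem~\ref{colorblind} (that $\mathcal{A}^c_n/S_n\cong\mathcal{A}_n$), making the adjustments forced by the presence of the distinguished uncolor $c^*$. First I would spell out precisely the action of $S_n$ on $\mathcal{Z}^c_n$: for $\sigma\in S_n$ and a face $(C,t)$ of $\mathcal{Z}^c_n$, set $\sigma((C,t)):=(\sigma(C),t_\sigma)$, where $\sigma$ fixes $c^*$ pointwise (it only permutes the $n$ colors of $C(K)$), and $t_\sigma$ is the centrally symmetric triangulation obtained from $t$ by recoloring each centrally symmetric pair of non-central diagonals carrying color $c$ with color $\sigma(c)$, leaving the uncolored central diagonal untouched. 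One checks, exactly as in the associahedron case, that this is order-preserving, hence an automorphism, and that it is well-defined because recoloring commutes with the pair-flips that generate the vertex sets of faces.

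Next I would form the quotient $\mathcal{Z}^c_n/S_n$ and identify its faces with $S_n$-orbits of faces of $\mathcal{Z}^c_n$, incidence being inherited. The key observation, again parallel to the associahedron discussion, is that for any face $(C,t)$ and any $\sigma\in S_n$ we have $supp(t_\sigma)=supp(t)$ as a centrally symmetric triangulation of $K$ (recoloring changes nothing uncolored, and in particular nothing about the central diagonal), so every face in the orbit of $(C,t)$ has the same underlying uncolored triangulation. Moreover the vertex set of $(\sigma(C),t_\sigma)$ is the image under $\sigma$ of the vertex set of $(C,t)$, so $C$ and $\sigma(C)$ encode the same collection of uncolored diagonals held rigid during the flips, namely those carrying colors outside $C$ (with the convention that $c^*\notin C$ corresponds to keeping the central diagonal rigid). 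Consequently the orbit of $(C,t)$ is completely determined by the pair consisting of $supp(t)$ together with this set of rigid uncolored diagonals — which is exactly the data indexing a face of the ordinary cyclohedron $\mathcal{Z}_n$. Checking that this correspondence respects incidence in both directions, and is a bijection on faces of each rank, then yields $\mathcal{Z}^c_n/S_n\cong\mathcal{Z}_n$.

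The step I expect to require the most care is the bookkeeping around the uncolor $c^*$. In the associahedron every diagonal carries an honest color, so the $S_n$-orbit of a $j$-face is a clean object; here the central diagonal is special, it is never recolored by $S_n$, and whether $c^*\in C$ or not changes which diagonals are flippable versus rigid. One must verify that the orbit-to-face correspondence behaves correctly in both regimes — when the omitted colors include $c^*$ (so the central diagonal of $supp(t)$ is among the rigid diagonals) and when they do not (so the central diagonal is allowed to flip). Because $S_n$ fixes $c^*$, it fixes the ``$c^*$-coordinate'' of every face, so no collapsing or identification happens along the $c^*$ direction; this is precisely what makes the quotient come out to be the full ordinary cyclohedron rather than something smaller, and it is the point that distinguishes this proof, in spirit, from the associahedron case even though the formal steps are the same. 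Once this is confirmed, the remainder is the routine verification, identical in form to the text preceding Theorem~\ref{colorblind}, that going colorblind on $\mathcal{Z}^c_n$ — forgetting all $n$ non-trivial colors while remembering which diagonals are rigid — produces exactly $\mathcal{Z}_n$.
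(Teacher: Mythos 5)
Your proposal is correct and is essentially the paper's own argument: the paper proves Theorem~\ref{cyccolorblind} only by the remark that ``very similar considerations as before'' (i.e.\ the discussion preceding Theorem~\ref{colorblind}) establish it, and your write-up carries out exactly that transfer, with the additional bookkeeping for the fixed uncolor $c^*$ made explicit. Your attention to the fact that $S_n$ fixes $c^*$, so that rigidity of the central diagonal is preserved on orbits, is precisely the point the paper leaves implicit.
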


Thus $\mathcal{Z}^{c}_n$ covers $\mathcal{Z}_n$. More informally, just as for associahedra, going colorblind on the colorful cyclohedron yields the ordinary cyclohedron.

The automorphism group of the colorful cyclohedron $\mathcal{Z}^{c}_n$ can be derived from that of the colorful associahedron $\mathcal{A}^{c}_n$. In fact, we have the following theorem.

\begin{theorem}\label{colasscentralgroup} 
$\Gamma(\mathcal{Z}^{c}_n) \cong S_{n}\times D_{n+2}$ when $n>1$, and $\Gamma(\mathcal{Z}^{c}_n) \cong D_{6}$ when $n=1$.
\end{theorem}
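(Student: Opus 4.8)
The plan is to pin down $|\Gamma(\mathcal{Z}^c_n)|$ by a counting argument, using Theorem~\ref{colassocgroup} for the facets $F^{d}$, after disposing of the exceptional case $n=1$ by inspection. When $n=1$ the polygon $K$ is a centrally symmetric hexagon and $\mathcal{Z}^c_1$ is itself a hexagon (a rank~$2$ polytope), whose automorphism group is $D_6$; this strictly contains $S_1\times D_3=D_3$, the point being that the three facets $F^{d}$ are combinatorially indistinguishable from the other three edges, so the classification of facets used below breaks down. Assume henceforth $n\geq2$. The easy inclusion $S_n\times D_{n+2}\leq\Gamma(\mathcal{Z}^c_n)$ is obtained as for the associahedron: $S_n$ acts by permuting the colors in $C(K)$ (fixing the uncolor $c^*$), while $\Gamma(K)=D_{2n+4}$ acts on centrally symmetric triangulations by rigid motions of $K$ preserving colors; since the central point symmetry $\iota\in\Gamma(K)$ fixes every centrally symmetric triangulation it induces the identity, so this action factors through $\Gamma(K)/\langle\iota\rangle\cong D_{n+2}$, and one checks it is faithful on $\mathcal{Z}^c_n$. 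These two subgroups commute and meet trivially, hence generate $S_n\times D_{n+2}$. In particular $D_{n+2}$ permutes the $n+2$ facets $F^{d}$ with a central rigid diagonal transitively.

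\textbf{The counting step.} It suffices to prove $|\Gamma(\mathcal{Z}^c_n)|\leq|S_n\times D_{n+2}|=2\,n!\,(n+2)$. Since for $n\geq2$ the facets $F^{d}$ are exactly the facets of $\mathcal{Z}^c_n$ isomorphic to $\mathcal{A}^c_n$, every automorphism permutes the $n+2$ of them, and by the previous paragraph their $\Gamma(\mathcal{Z}^c_n)$-orbit has size $n+2$. Fix one such $F:=F^{d}$; the central diagonal $d$ splits $K$ into two halves, each a convex $(n+3)$-gon $\widehat N$, and $F$ is a copy of $\mathcal{A}^c_n$ built from $\widehat N$, with $d$ appearing as one side of $\widehat N$. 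Restriction to $F$ gives an injective homomorphism $\mathrm{Stab}_{\Gamma(\mathcal{Z}^c_n)}(F)\hookrightarrow\Gamma(F)\cong\Gamma(\mathcal{A}^c_n)\cong S_n\times D_{n+3}$ (injective since an automorphism trivial on a facet fixes a flag of $\mathcal{Z}^c_n$, hence is trivial). So it is enough to show the image of this restriction lies in a subgroup of order $2\,n!$: then $|\Gamma(\mathcal{Z}^c_n)|=(n+2)\,|\mathrm{Stab}(F)|\leq 2\,n!\,(n+2)$, and equality with $S_n\times D_{n+2}$ follows.

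\textbf{Locating the image.} Recall from Section~\ref{autcolas} that the $n+3$ imprimitivity blocks of facets of $\mathcal{A}^c_n$ encode the vertices of the polygon $\widehat N$, with $S_n$ fixing each block and $D_{n+3}$ permuting them dihedrally. The key observation is that, inside $\mathcal{Z}^c_n$, these blocks split into two $\Gamma(\mathcal{Z}^c_n)$-invariant classes. Freezing, in addition to the central diagonal $d$, the short diagonal of $\widehat N$ at a vertex $i$ yields a rank $(n-1)$ face of $\mathcal{Z}^c_n$ contained (by the diamond condition) in exactly one facet besides $F$: one checks that the corresponding rigidified centrally symmetric pair of non-central diagonals of $K$ cuts off two triangles — so the other facet is a short-pair facet, isomorphic to $\mathcal{Z}^c_{n-1}$ — precisely when $i$ is an interior vertex of the half, whereas when $i$ is one of the two endpoints of the side $d$ the pair produces a square central region, so the other facet is one of the facets with a square central region. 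For $n\geq2$ these two families of facets of $\mathcal{Z}^c_n$ are non-isomorphic (a vertex count; for $n=2$, a hexagon versus a square), so any $\gamma\in\mathrm{Stab}(F)$ preserves this dichotomy, hence preserves the two-element set of ``endpoint blocks''. Thus the $D_{n+3}$-component of $\gamma|_F$ fixes the side $d$ of $\widehat N$, and since the stabilizer of a side in $D_{n+3}$ has order $2$ we get $\gamma|_F\in S_n\times(\text{order }2)$, a subgroup of order $2\,n!$, as required.

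\textbf{Main obstacle.} The crux is the bookkeeping in the last paragraph: one must determine exactly which centrally symmetric diagonal pair of $K$ is rigidified upon freezing the short diagonal of the half-polygon at $i$, and verify it cuts off two triangles iff $i$ is interior and yields a square central region iff $i$ is an endpoint of $d$. This is precisely where the asymmetry between the ``ordinary'' sides of $\widehat N$ (which are sides of $K$) and the distinguished side $d$ (which is a central diagonal of $K$) enters, and it is what collapses $D_{n+3}$ down to order $2$. The remaining points — faithfulness of the $\Gamma(K)/\langle\iota\rangle$-action, non-isomorphism of the two facet families, injectivity of restriction, and the case $n=1$ — are routine.
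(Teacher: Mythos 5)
Your proposal is correct and follows essentially the same route as the paper: after establishing the subgroup $S_{n}\times D_{n+2}$ and the case $n=1$, both arguments identify the $n+2$ facets $F^{d}$ as the only facets isomorphic to $\mathcal{A}^{c}_n$, restrict the stabilizer of one of them to $\Gamma(\mathcal{A}^{c}_n)\cong S_{n}\times D_{n+3}$, and collapse the $D_{n+3}$-factor to the order-$2$ stabilizer of the side $d$ by observing that the ridges at the two endpoints of $d$ lie in square-central-region facets while those at interior vertices lie in facets isomorphic to $\mathcal{Z}^{c}_{n-1}$. Your orbit--stabilizer counting is merely a repackaging of the paper's successive reduction modulo $D_{n+2}$, $\rho'$ and $S_n$, so no substantive difference remains.
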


\begin{proof}
First note that $\mathcal{Z}^{c}_n$ is just a hexagon when $n=1$, and hence $\Gamma(\mathcal{Z}^{c}_1) \cong D_{6}$.

Now assume that $n>1$ and suppose the vertices of $K$ are labeled $1,\ldots,2n+4$, in cyclic order. Then the central diagonals of $K$ are given by $\{i,i+n+2\}$ for $i=1,\ldots,n+2$. 

First note that $\Gamma(K)=D_{2n+4}$ acts in an obvious way on $\mathcal{Z}^{c}_n$. However, since the central inversion $\iota$ of $K$ acts trivially on $\mathcal{Z}^{c}_n$, the subgroup of $\Gamma(\mathcal{Z}^{c}_n)$ induced by $\Gamma(K)$ is isomorphic to $D_{2n+4}/{\langle \iota\rangle} \cong D_{n+2}$ and thus will be denoted by $D_{n+2}$. Hence, since the two subgroups $D_{n+2}$ and $S_n$ of $\Gamma(\mathcal{Z}^{c}_n)$ centralize each other, $\Gamma(\mathcal{Z}^{c}_n)$ must necessarily contain a subgroup isomorphic to $S_{n}\times D_{n+2}$. It remains to show that this is the full group.

Now suppose $\gamma\in\Gamma(\mathcal{Z}^{c}_n)$. Consider the $n+2$ (mutually isomorphic) facets $F^d$ of $\mathcal{Z}^{c}_n$ with central rigid diagonals $d$. These facets are permuted under automorphisms of $\mathcal{Z}^{c}_n$; in fact, by our previous considerations, since $n>1$ they are the only facets of $\mathcal{Z}^{c}_n$ isomorphic to the colorful $n$-associahedron $\mathcal{A}_{n}^c$. In particular this applies to the automorphism $\gamma$.

Consider the central diagonal $e:=\{1,n+3\}$ of $K$, and let $N$ denote the $(n+3)$-gon with vertices $1,\ldots,n+3$ cut off $K$ by $e$. Then $\gamma(F^e)$ is a facet of $\mathcal{Z}^{c}_n$ isomorphic to $F^e$ and hence must be determined by a central diagonal of $K$. Therefore, since $\Gamma(K)$ acts transitively on the central diagonals of $K$, we may assume that, modulo the subgroup $D_{n+2}$ of $\Gamma(\mathcal{Z}^{c}_n)$, this facet is $F^e$ itself; that is, $F^e$ is stabilized by $\gamma$. It follows that $\gamma$ (or rather its restriction to $F^e$) must lie in the automorphism group of $F^e$, which is isomorphic to $\Gamma(\mathcal{A}_{n}^{c})=S_{n}\times \Gamma(N) = S_{n}\times D_{n+3}$. Suppose $\gamma=\alpha\beta$ with $\alpha\in S_n$ and $\beta\in\Gamma(N)$.  (Here, $\alpha$ and $\beta$ are automorphisms of the facet $F^e$, but a priori not of $\mathcal{Z}^{c}_n$ itself.)

Each $(n-1)$-face $G$ of the facet $F^e$ is of the form $G=(C,t)$, with $t$ a vertex of $F^e$ and $C=C(K)\setminus\{c_G\}$ for some $c_{G}\in C(K)$. Let $d_{G}',d_{G}''$ denote the centrally symmetric pair of non-central diagonals of $K$ whose color in $t$ is $c_G$. Flips involved in generating $G$ can only be performed on diagonals distinct from $d_{G}'$, $d_{G}''$ and $e$. However, for reasons similar to those mentioned earlier, an $(n-1)$-face $G$ of $F^e$ is not generally determined by the diagonals $d_{G}',d_{G}''$ and the color $c_G$; it also depends on the distribution of colors over the regions into which $K$ is dissected by $d_{G}'$, $d_{G}''$ and $e$. Note that, up to relabeling, we may always assume that $d_{G}'=d_{i,j}':=\{i,j\}$ and $d_{G}''=d_{i,j}'':=\{i+n+2,j+n+2\}$ for some $i$ and $j$ with $1\leq i < j-1\leq n+2$.

By the diamond condition, each $(n-1)$-face $G$ of $F^e$ lies in exactly one other facet, $F_G$ (say), of $\mathcal{Z}^{c}_n$. Let $\mathcal{F}^e$ denote the family of all facets of $\mathcal{Z}^{c}_n$ distinct from $F^e$ which have an $(n-1)$-face in common with $F^e$. As $\gamma$ and $\beta$ leave $F^e$ invariant, they must necessarily permute the facets in $\mathcal{F}^e$.

For $1\leq i < j-1\leq n+2$, let $\mathcal{F}_{i,j}^e$ denote the family of facets $F_G$ in $\mathcal{F}^e$ with $G$ such that $d_{G}'=d_{i,j}',d_{G}''=d_{i,j}''$. Now observe that the facets in the families $\mathcal{F}_{i,i+2}^e$ ($i=1,\ldots,n+1$) are isomorphic to colorful $n$-cyclohedra $\mathcal{Z}^{c}_{n-1}$, while those in $\mathcal{F}_{2,n+3}^e$ and $\mathcal{F}_{1,n+2}^e$ are not. The combinatorial automorphism of $N$ inducing the automorphism $\beta$ of the facet $F^e$ naturally permutes the $n+3$ diagonals $d_{i,i+2}'$ ($i=1,\ldots,n+1$), $d_{2,n+3}'$ and $d_{1,n+2}'$ of $K$, so $\beta$ must induce a permutation of the $n+3$ families $\mathcal{F}_{i,i+2}^e$ ($i=1,\ldots,n+1$), $\mathcal{F}_{2,n+3}^e$ and $\mathcal{F}_{1,n+2}^e$. But automorphisms of polytopes take facets to isomorphic facets, so $\beta$ must either fix both families $\mathcal{F}_{2,n+3}^e$ and $\mathcal{F}_{1,n+2}^e$, or interchange them. Now, if the families $\mathcal{F}_{2,n+3}^e$ and $\mathcal{F}_{1,n+2}^e$ are interchanged by $\beta$, then the combinatorial automorphism of $N$ underlying $\beta$ is simply the combinatorial reflection $\rho$ on $N$ that takes $e$ to itself.

 Thus $\beta$ is either the identity automorphism of $\mathcal{A}^{c}_n$ or the automorphism of $\mathcal{A}^{c}_n$ induced by $\rho$. 

In the former case we know that $\gamma$ is an automorphism of $\mathcal{Z}^{c}_n$ that acts on the facet $F^e$ like the element $\alpha$ of $S_n$; in particular, $\gamma$ maps colored triangulations in $F^e$ to colored triangulations in $F^e$ with the same support. As we remarked earlier, $\Gamma(\mathcal{Z}^{c}_n)$ contains a subgroup isomorphic to $S_n$ obtained from the natural action of $S_n$ on $\mathcal{Z}^{c}_n$. Now let $\widehat{\alpha}$ denote the element of this subgroup induced by the permutation on $1,\ldots,n$ given by $\alpha$; less formally, $\widehat{\alpha}$ is induced by $\alpha$. Then it is clear that $\widehat{\alpha}^{-1}\gamma$ is an automorphism of $\mathcal{Z}^{c}_n$ that fixes $F^e$ and acts on the entire facet $F^e$ like the identity. Hence, since automorphisms of polytopes are determined by the effect on a flag, $\widehat{\alpha}^{-1}\gamma$ must in fact be the identity on $\mathcal{Z}^{c}_n$. Thus $\gamma=\widehat{\alpha}$ lies in the copy of $S_n$ inside $\Gamma(\mathcal{Z}^{c}_n)$. 

In the latter case we can think of $\rho$ as the restriction to $N$, of the combinatorial reflection $\rho'$ of $K$ that interchanges the vertices of $e$ (and leaves $e$ invariant). It follows that we can view $\beta$ as the automorphism induced on $F^e$ by the combinatorial automorphism $\rho'$ of $K$ (considered modulo $\iota$). Hence, by applying the automorphism of $\mathcal{Z}^{c}_n$ determined by $\rho'$ we can reduce $\gamma$ modulo $D_{n+2}$ to an automorphism of $\mathcal{Z}^{c}_n$ that acts on the facet $F^e$ like the element $\alpha$ of $S_n$. But now we can argue as in the previous case and conclude that $\gamma$ lies in the subgroup $S_n$ of $\Gamma(\mathcal{Z}^{c}_n)$. This completes the proof.
\end{proof}

Note that a simplified version of our proof also shows that the automorphism group of the ordinary $(n+1)$-cyclohedron $\mathcal{Z}_n$ is given by 
\[ \Gamma(\mathcal{Z}_n) \cong D_{n+2}.\]

\noindent
{\em Acknowledgement.}
We are very grateful to the anonymous referees for their careful reading of our original manuscript and for the helpful comments and corrections that have greatly improved our article.


\begin{thebibliography}{99}

\bibitem{colpoly} G.~Araujo-Pardo, I.~Hubard, D.~Oliveros and E.~Schulte, {\em Colorful polytopes and graphs\/}, Israel J. Mathematics {\bf 195} (2013), 647--675.

\bibitem{Bott} R.~Bott and C.~Taubes, {\em On the self-linking of knots\/}, J. Math. Phys. {\bf 35} (1994), 5247--5287. 

\bibitem{brualdi} R.A.~Brualdi, {\em Introductory Combinatorics\/}, 4th edition, Prentice Hall, 2004.

\bibitem{csz} C.~Ceballos, F.~Santos and G.M.~Ziegler, {\em Many non-equivalent realizations of the associahedron\/}, ArXiv 2011.

\bibitem{cfz} F.~Chapoton, S.~Fomin and A.~Zelevinsky, {\em Polytopal realizations of generalized associahedra\/}, Canad. Math. Bull. {\bf 45} (2002), 537--566. 

\bibitem{CL96} G.~Chartrand and L.~Lesniak,  {\em Graphs and Digraphs\/}, 3rd edition, Chapman and Hall, 1996.

\bibitem{fom} S.~Fomin and N.~Reading, {\em Root systems and generalized associahedra\/}, In:\
Geometric Combinatorics (E.~Miller, V.~Reiner and B.~Sturmfels, eds.), IAS/Park City Math. Ser., 13, Amer. Math. Soc., Providence, RI, 2007, pp. 63--131.

\bibitem{fom-zele} S.~Fomin and A.~Zelevinsky, {\em Y-systems and generalized associahedra\/}, Ann. of Math. {\bf 158} (2003), 977--1018. 

\bibitem{lee} C.W.~Lee, {\em The associahedron and triangulations of the $n$-gon\/}, European J.\ Combinatorics {\bf 10} (1989), 551--560.

\bibitem{loday} J.L.~Loday, {\em Realization of the Stasheff polytope\/}, Arch. Math. {\bf 83} (2004), 267--278.

\bibitem{McMS02} P.~McMullen and E.~Schulte,  {\em Abstract Regular Polytopes\/}, Cambridge University Press, 2002.

\bibitem{post} A.~Postnikov, {\em Permutohedra, associahedra, and beyond\/}, Int. Math. Res. Notices, Vol. 2009, No. 6, pp. 1026--1106.

\bibitem{Shinder} S.~Shnider and S.~Sternberg, {\em Quantum Groups: From Coalgebras to Drinfeld Algebras\/}, Graduate Texts in Mathematical Physics, International Press Inc., Boston, 1993.

\bibitem{simion} R.~Simion, {\em A type-B associahedron\/}, Adv. Appl. Math. {\bf 30} (2003), 2--25.

\bibitem{stan}  R.P.~Stanley, {\em Enumerative Combinatorics\/}, Vol. 1, Second Edition, Cambridge Studies in Advanced Mathematics, 49, Cambridge University Press, Cambridge, 2012. 

\bibitem{stash} J.~Stasheff, {\em Homotopy associativity of H-spaces I, II\/}, Trans. Amer. Math. Soc. {\bf 108} (1963), 275--292 and 293--312. 

\bibitem{tamari} D.~Tamari, {\em The algebra of bracketings and their enumeration\/}, Nieuw Archief voor Wiskunde, Ser. 3, {\bf 10}  (1962), 131--146.


\bibitem{Vizing} V.G.~Vizing, {\em On an estimate of the chromatic class of a $p$-graph\/}, Diskret. Analiz. {\bf 3} (1964), 25--30.

\bibitem{Z95} G.~Ziegler, {\em Lectures on Polytopes\/}, Springer-Verlag, New York, 1994.


\end{thebibliography}
\end{document}